\newtheorem{theorem}{Theorem}
\newtheorem{definition}[theorem]{Definition}
\newtheorem{lemma}[theorem]{Lemma}
\newtheorem{remark}[theorem]{Remark}
\newenvironment{proof}[1][Proof]{\textbf{#1.} }{\ \rule{0.5em}{0.5em}}
\begin{document}

\title{Twisted Dedekind Type Sums Associated with Barnes' Type Multiple
Frobenius-Euler $l$-Functions}
\author{Mehmet Cenkci, Yilmaz Simsek, Mumun Can \ and Veli Kurt \\
%EndAName
Department of Mathematics, Akdeniz University, \\
07058-Antalya, Turkey \\
cenkci@akdeniz.edu.tr, ysimsek@akdeniz.edu.tr,\\
mcan@akdeniz.edu.tr, vkurt@akdeniz.edu.tr}
\date{}
\maketitle

\textbf{Abstract :} The aim of this paper is to construct new Dedekind type
sums. We construct generating functions of Barnes' type multiple
Frobenius-Euler numbers and polynomials. By applying Mellin transformation
to these functions, we define Barnes' type multiple $l$-functions, which
interpolate Frobenius-Euler numbers at negative integers. By using
generalizations of the Frobenius-Euler functions, we define generalized
Dedekind type sums and prove corresponding reciprocity law. We also give
twisted versions of the Frobenius-Euler polynomials and new Dedekind type
sums and corresponding reciprocity law. Furthermore, by using $p$-adic $q$%
-Volkenborn integral and twisted $(h,q)$-Bernoulli functions, we construct $p
$-adic $(h,q)$-higher order Dedekind type sums. By using relation between
Bernoulli and Frobenius-Euler functions, we also define analogues of
Hardy-Berndt type sums. We give some new relations related to to these sums
as well.

\bigskip

\textbf{Keywords :} Barnes' type multiple Frobenius-Euler numbers and
polynomials, Barnes' type multiple Frobenius-Euler $l$-functions, Bernoulli
polynomials and functions, Dedekind sums.

\bigskip

\textbf{MSC 2000 :} 11F20, 11B68, 11M41, 11S40.

\section{Introduction, definitions and notations}

\setcounter{equation}{0} \setcounter{theorem}{0}

It is well-known that the classical Dedekind sums $s(h,k)$ first arose in
the transformation formula of the logarithm of the Dedekind-eta function. If 
$h$ and $k$ are coprime integers with $k>0$, Dedekind sums are defined by%
\begin{equation*}
s\left( h,k\right) =\sum_{a=1}^{k-1}\left( \left( \frac{a}{k}\right) \right)
\left( \left( \frac{ha}{k}\right) \right) ,
\end{equation*}%
where%
\begin{equation*}
\left( \left( x\right) \right) =\left\{ 
\begin{array}{ll}
x-[x]_{G}-\frac{1}{2}\text{,} & \text{if }x\text{ is not an integer} \\ 
0\text{,} & \text{if }x\text{ is an integer,}%
\end{array}%
\right.
\end{equation*}%
$[x]_{G}$ being the largest integer $\leq x$. The most important property of
Dedekind sums is the reciprocity law, which is given by%
\begin{equation*}
s(h,k)+s(k,h)=\frac{1}{12}\left( \frac{h}{k}+\frac{k}{h}+\frac{1}{hk}\right)
-\frac{1}{4}.
\end{equation*}%
For detailed information of Dedekind sums see (\cite{1}, \cite{5}, \cite{33}%
, \cite{34}, \cite{40}, \cite{3}, \cite{35}, \cite{2}, \cite{7}, \cite{19}, 
\cite{12}, \cite{11}, \cite{9}, \cite{8}, \cite{10}, \cite{simjnt2003dede}, 
\cite{simADCMgenDedeEisn}, \cite{simjmaaqDed}, \cite{simASCMsyang}, \cite{38}%
, \cite{37}).

In this paper, we define new Dedekind type sums related to Frobenius-Euler
functions as follows:

\begin{definition}
\label{def10}Let $n$, $h$ and $k$ be positive integers with $(h,k)=1$. We
define%
\begin{equation*}
S_{n,u}\left( h,k\right) =\sum_{a=0}^{k-1}u^{-\frac{ha}{k}}\frac{a}{k}%
\overline{H}_{n}\left( \frac{ha}{k},u\right) ,
\end{equation*}%
where $\overline{H}_{n}\left( \frac{ha}{k},u\right) $ denotes
Frobenius-Euler function, which is given by Definition \ref{def7}, and $u$
is an algebraic number $\neq 1$.
\end{definition}

The most important properties of these sums is the reciprocity law, which is
given by the following theorem.

\begin{theorem}
\label{th11}Let $n$, $h$ and $k$ be positive integers with $(h,k)=1$. Then,
we have%
\begin{eqnarray*}
&&\left( \frac{u^{k}}{1-u^{k}}k^{n}S_{n,u^{k}}\left( h,k\right) +\frac{u^{h}%
}{1-u^{h}}h^{n}S_{n,u^{h}}\left( k,h\right) \right) \\
&=&\sum_{j=0}^{n}\binom{n}{j}\frac{u^{k}}{1-u^{k}}H_{j}\left( u^{k}\right)
k^{j}\frac{u^{h}}{1-u^{h}}H_{n-j}\left( u^{h}\right) h^{n-j} \\
&&+\frac{1}{hk}\frac{u}{1-u}H_{n+1}\left( u\right) +\frac{u}{1-u}H_{n}\left(
u\right) ,
\end{eqnarray*}%
where $H_{n}\left( u\right) $ denotes Frobenius-Euler numbers given by (\ref%
{2,1}).
\end{theorem}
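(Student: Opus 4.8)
The plan is to regard $S_{n,u}(h,k)$ as a twisted analogue of Apostol's generalized Dedekind sums (one Frobenius-Euler factor weighted by the linear term $\tfrac ak$) and to prove the reciprocity law by exhibiting the left-hand combination as the coefficients of an explicit generating function, every ingredient of which comes from two elementary identities for the Frobenius-Euler functions.

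First I would record these identities. From $\sum_{n\ge 0}\overline H_n(x,u)\tfrac{t^n}{n!}=\tfrac{1-u}{e^t-u}e^{xt}$ and the finite geometric sum $\sum_{a=0}^{m-1}(u^{-1}e^t)^a=u^{1-m}\tfrac{e^{mt}-u^m}{e^t-u}$ one obtains, for every integer $m\ge 1$, the distribution (multiplication) relation
\[
\overline H_n(mx,u)=\frac{(1-u)\,u^{m-1}}{1-u^m}\,m^n\sum_{a=0}^{m-1}u^{-a}\,\overline H_n\!\left(x+\frac am,\,u^m\right),
\]
and from $\tfrac{1-u}{e^t-u}e^{(x+y)t}=e^{yt}\cdot\tfrac{1-u}{e^t-u}e^{xt}$ the addition formula $\overline H_n(x+y,u)=\sum_{j=0}^n\binom nj\overline H_j(x,u)\,y^{\,n-j}$, in particular $\overline H_n(x,u)=\sum_{j=0}^n\binom nj H_j(u)\,x^{\,n-j}$. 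I would also use that the Frobenius-Euler \emph{function} is twisted-periodic, so that $u^{-x}\overline H_n(x,u)$ depends only on $x\bmod 1$ (this is precisely why the weight $u^{-ha/k}$ appears in Definition~\ref{def10}), together with $\tfrac{d}{dx}\overline H_n(x,u)=n\,\overline H_{n-1}(x,u)$, so that summing the weight $\tfrac ak$ raises the Frobenius-Euler index from $n$ to $n+1$ and produces the factor $\tfrac1{hk}$.

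The main step: using twisted periodicity one rewrites $u^{-ha}\,\overline H_n(\tfrac{ha}k,u^k)=u^{-(ha\bmod k)}H_n(\tfrac{ha\bmod k}k,u^k)$, so that $k^nS_{n,u^k}(h,k)=\sum_{a=0}^{k-1}a\,k^{n-1}u^{-(ha\bmod k)}H_n(\tfrac{ha\bmod k}k,u^k)$, and similarly for $S_{n,u^h}(k,h)$; one then applies the distribution relation (in the modulus $h$, respectively $k$) and uses $(h,k)=1$, so that the relevant indices run once over complete residue systems. The off-diagonal part of the resulting double sum reorganizes, through the addition formula applied to the \emph{product} of two Frobenius-Euler generating functions, into $\sum_{j=0}^n\binom nj\tfrac{u^k}{1-u^k}H_j(u^k)k^j\,\tfrac{u^h}{1-u^h}H_{n-j}(u^h)h^{n-j}$, which is $n!\,[t^n]\bigl(\tfrac{u^k}{e^{kt}-u^k}\cdot\tfrac{u^h}{e^{ht}-u^h}\bigr)$. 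What remains is the diagonal stratum -- the indices $a\equiv 0$ killed by coprimality -- together with the leftover $\tfrac ak$-summation; summing the weight there and using $\tfrac{d}{dx}\overline H_n=n\overline H_{n-1}$ and $\overline H_n(0,u)=H_n(u)$ collapses this to exactly $\tfrac1{hk}\tfrac u{1-u}H_{n+1}(u)+\tfrac u{1-u}H_n(u)$. Adding the three contributions yields the asserted identity.

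The delicate point is this last step: because $\overline H_n$ is only twisted-periodic, the bookkeeping of the residues $ha\bmod k$ and of the boundary terms is fiddly, and one must show that the leftover linear-weight sum assembles into precisely those two correction terms and not merely into something of the same shape; keeping consistent the various $\tfrac{u^m}{1-u^m}$-type constants thrown off by the distribution relations is the other place that needs care. (As an alternative one can argue by Rademacher's contour method: integrate a suitable product of the kernels $\tfrac{1-u^h}{e^{hz}-u^h}$ and $\tfrac{1-u^k}{e^{kz}-u^k}$, carrying a factor that encodes the weight $\tfrac ak$, around a large circle; the residue at $z=0$ supplies the convolution main term, the residues at the poles of the two kernels supply the two Dedekind-type sums, and the residue at the unique common pole $z=\log u$ -- unique because $(h,k)=1$ -- supplies the $\tfrac1{hk}H_{n+1}(u)$ and $H_n(u)$ corrections.)
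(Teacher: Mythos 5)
Your primary argument has a genuine gap at the point where the convolution term $\sum_{j=0}^{n}\binom{n}{j}\frac{u^{k}}{1-u^{k}}H_{j}(u^{k})k^{j}\frac{u^{h}}{1-u^{h}}H_{n-j}(u^{h})h^{n-j}$ is supposed to appear. You assert that the off-diagonal part of a finite double sum over $(a,b)$ ``reorganizes, through the addition formula,'' into this term. But the addition formula only redistributes a \emph{single} Frobenius--Euler kernel: after your distribution relations every summand is a translate of the one kernel $\frac{1-u^{hk}}{e^{hkt}-u^{hk}}$ of modulus $hk$, a function of $t$ with only simple poles, whereas the convolution term is $n!$ times the $t^{n}$-coefficient of the product $\frac{u^{k}}{e^{kt}-u^{k}}\cdot\frac{u^{h}}{e^{ht}-u^{h}}$, which has a \emph{double} pole at $t=\log u$ (the unique common pole of the two factors, because $(h,k)=1$). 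No finite linear combination of translates of the single modulus-$hk$ kernel can equal that product, so the cross term cannot be produced by finite rearrangement plus the addition formula; if instead you only claim the identity coefficient-by-coefficient, you have restated the theorem rather than proved it. Some analytic input is indispensable here. In the paper that input is the Barnes-type double $l$-function $l_{2}(s;u|k,h)$ of Definition \ref{def4}: writing $km+hn=kb+ha+hkM$ and splitting the multiplicity $M+1=\bigl(M+\frac{kb+ha}{hk}\bigr)+\bigl(1-\frac{a}{k}-\frac{b}{h}\bigr)$ expresses $l_{2}$ through one-variable $l$-functions, and the theorem is exactly the equality of the two evaluations of $l_{2}(-n;u|k,h)$, namely (\ref{2,6}) from this decomposition versus (\ref{2,7}) from Theorem \ref{th20}. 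Your finite manipulations (twisted periodicity, the Lemma \ref{le8} distribution relation, the Lemma \ref{le9} diagonal evaluation) reproduce only the passage to (\ref{2,6}); they never reach (\ref{2,7}).

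Your parenthetical alternative --- integrating the product kernel $\frac{1-u^{k}}{e^{kz}-u^{k}}\cdot\frac{1-u^{h}}{e^{hz}-u^{h}}$ and reading off residues --- is the viable way to repair this, and it is genuinely different from the paper's route: the residue at $z=0$ supplies the convolution, the simple poles of each factor supply the two Dedekind-type sums, and the double pole at $z=\log u$ supplies the $\frac{1}{hk}\frac{u}{1-u}H_{n+1}(u)+\frac{u}{1-u}H_{n}(u)$ corrections, with no analytic continuation of a double Dirichlet series needed. But you give it one sentence. Encoding the weight $\frac{a}{k}$ and the twist $u^{-ha}$ in the integrand, choosing the contour, and showing the large-circle contribution vanishes for $|u|\geq 1$ all require real work. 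As written, the proposal does not amount to a proof by either route.
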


Throughout this paper, $\chi $ will denote a Dirichlet character of
conductor $f=f_{\chi }$, and $\chi _{0}$ will be a principle character with
conductor $f_{\chi _{0}}=1$. We also define Dedekind type sums attached to $%
\chi $ as follows:

\begin{definition}
\label{def19}Let $n$, $h$, $k$ be positive integers with $(h,k)=1$. Dedekind
type sums $S_{n,u^{k}}\left( h,k|\chi \right) $ are defined by%
\begin{equation*}
S_{n,u^{k}}\left( h,k|\chi \right)
=h^{n}\sum_{a=0}^{k-1}\sum_{b=0}^{h-1}\chi \left( kb+ha\right) u^{-\left(
kb+ha\right) }\frac{a}{k}\overline{H}_{n}\left( \frac{a}{k}+\frac{b}{h}%
,u^{hk}\right) .
\end{equation*}
\end{definition}

Note that if $\chi =\chi _{0}$ (that is, $f=1$), then%
\begin{equation*}
S_{n,u^{k}}\left( h,k|1\right) =\frac{u^{hk}-1}{u^{hk}}\frac{u^{k}}{u^{k}-1}%
S_{n,u^{k}}\left( h,k\right) .
\end{equation*}

We also note that the Definition \ref{def19} is different from Nagasaka
et.al's definition \cite{9}. In \cite{9}, Dedekind sums with character are
defined by using Bernoulli polynomials and Bernoulli function. In our
definition, we use Frobenius-Euler function $\overline{H}\left( x,u\right) $.

Reciprocity law of $S_{n,u^{k}}\left( h,k|\chi \right) $ is given by the
following theorem:

\begin{theorem}
\label{th4} Let $\chi $ be a Dirichlet character of conductor $f=f_{\chi }$
with $f|hk$. Let $n$, $h$ and $k$ be positive integers with $(h,k)=1$. Then,
we have%
\begin{eqnarray*}
&&\left( k^{n}S_{n,u^{k}}\left( h,k|\chi \right) +h^{n}S_{n,u^{h}}\left(
k,h|\chi \right) \right) \\
&=&\frac{1-u^{hk}}{u^{hk}}\frac{u^{f}}{1-u^{f}}\left( \frac{1}{hk}%
H_{n+1,\chi }\left( u\right) +H_{n,\chi }\left( u\right) \right) \\
&&+\frac{u^{hk}}{u^{hk}-1}\sum_{a=0}^{k-1}\sum_{b=0}^{h-1}\chi \left(
kb+ha\right) u^{-\left( kb+ha\right) } \\
&&\times \left( ^{1}H\left( u^{hk}\right) hk+\text{ }^{2}H\left(
u^{hk}\right) hk+kb+ha\right) ^{n}.
\end{eqnarray*}
\end{theorem}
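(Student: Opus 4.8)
The plan is to collapse the two Dedekind-type sums on the left into one double sum, unfold that sum over a complete residue system modulo $hk$, and then separate the outcome into the ``multiple Frobenius--Euler polynomial'' part and the ``generalized Frobenius--Euler number'' part appearing on the right. First I would use that Definition \ref{def19} is symmetric under interchanging the two arguments followed by the relabelling $a\leftrightarrow b$: writing $\tfrac{a}{k}+\tfrac{b}{h}=\tfrac{ah+bk}{hk}$ and pulling $k^{n},h^{n}$ inside, the coefficient of $\overline{H}_{n}$ becomes $k^{n-1}h^{n}a+h^{n-1}k^{n}b=(hk)^{n-1}(ah+bk)$, whence
\[
k^{n}S_{n,u^{k}}(h,k|\chi)+h^{n}S_{n,u^{h}}(k,h|\chi)=(hk)^{n-1}\sum_{a=0}^{k-1}\sum_{b=0}^{h-1}\chi(ah+bk)\,u^{-(ah+bk)}\,(ah+bk)\,\overline{H}_{n}\!\left(\tfrac{ah+bk}{hk},u^{hk}\right).
\]

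Since $(h,k)=1$, the map $(a,b)\mapsto ah+bk$ is a bijection of $\{0,\dots,k-1\}\times\{0,\dots,h-1\}$ onto a complete residue system modulo $hk$, so $ah+bk=r+hk\varepsilon$ with $r$ ranging over $\{0,\dots,hk-1\}$ and $\varepsilon=\varepsilon(r)\in\{0,1\}$. Using $f\mid hk$ (hence $\chi(ah+bk)=\chi(r)$) and the quasi-periodicity $\overline{H}_{n}(x+1,w)=w\,\overline{H}_{n}(x,w)$ of the Frobenius--Euler function from Definition \ref{def7}, the product $u^{-(ah+bk)}\overline{H}_{n}\!\left(\tfrac{ah+bk}{hk},u^{hk}\right)$ collapses to $u^{-r}H_{n}\!\left(\tfrac{r}{hk},u^{hk}\right)$ --- the factor $u^{-hk\varepsilon}$ from the twist cancels the factor $(u^{hk})^{\varepsilon}$ from $\overline{H}_{n}$, which is precisely why the twist $u^{-(kb+ha)}$ is built into the definition. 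The residual $ah+bk=r+hk\varepsilon$ then leaves a ``main'' sum $\sum_{r}\chi(r)u^{-r}\,r\,H_{n}(\tfrac{r}{hk},u^{hk})$ and a ``boundary'' correction $hk\sum_{r}\chi(r)u^{-r}\varepsilon(r)H_{n}(\tfrac{r}{hk},u^{hk})$.

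For the main sum I would invoke the identity
\[
x\,H_{n}(x,u)=H_{n+1}(x,u)+H_{n}(x,u)+\frac{u}{u-1}\,H_{n}^{(2)}(x,u),
\]
obtained by differentiating the generating function (\ref{2,1}) in the auxiliary variable, where $H_{n}^{(2)}$ is the order-two (Barnes-type double) Frobenius--Euler polynomial; with $x=\tfrac{r}{hk}$, parameter $u^{hk}$ and a factor $hk$ this turns $r\,H_{n}(\tfrac{r}{hk},u^{hk})$ into $hk\,H_{n+1}+hk\,H_{n}+\tfrac{u^{hk}}{u^{hk}-1}hk\,H_{n}^{(2)}$ at $\tfrac{r}{hk}$. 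On the right-hand side one rewrites $({}^{1}H(u^{hk})\,hk+{}^{2}H(u^{hk})\,hk+kb+ha)^{n}=(hk)^{n}H_{n}^{(2)}\!\left(\tfrac{a}{k}+\tfrac{b}{h},u^{hk}\right)$ and, using the difference equation $H_{n}^{(2)}(x+1,u)=(u-1)H_{n}(x,u)+u\,H_{n}^{(2)}(x,u)$ to absorb the $\varepsilon$-shift, checks that the $H_{n}^{(2)}$-term produced on the left \emph{together with} the boundary correction equals exactly the term $\tfrac{u^{hk}}{u^{hk}-1}\sum_{a,b}\chi(kb+ha)u^{-(kb+ha)}(\,^{1}H\,hk+{}^{2}H\,hk+kb+ha)^{n}$ of the statement; what is left over on the left is $(hk)^{n}\sum_{r}\chi(r)u^{-r}H_{n+1}(\tfrac{r}{hk},u^{hk})+(hk)^{n}\sum_{r}\chi(r)u^{-r}H_{n}(\tfrac{r}{hk},u^{hk})$.

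These two residue sums are collapsed by the Raabe (multiplication) formula for Frobenius--Euler polynomials: writing $r=fq+s$ with $0\le s<f$, $0\le q<hk/f$ and using that $\chi$ has period $f$, one gets $\sum_{r=0}^{hk-1}\chi(r)u^{-r}H_{m}(\tfrac{r}{hk},u^{hk})=\tfrac{1-u^{hk}}{u^{hk}}\tfrac{u^{f}}{1-u^{f}}\,(hk)^{-m}f^{m}\sum_{s=0}^{f-1}\chi(s)u^{-s}H_{m}(\tfrac{s}{f},u^{f})=\tfrac{1-u^{hk}}{u^{hk}}\tfrac{u^{f}}{1-u^{f}}\,(hk)^{-m}H_{m,\chi}(u)$, with $H_{m,\chi}(u)=f^{m}\sum_{s=0}^{f-1}\chi(s)u^{-s}H_{m}(\tfrac{s}{f},u^{f})$ the generalized Frobenius--Euler number; taking $m=n+1$ and $m=n$, the factor $(hk)^{n}$ turns $(hk)^{-(n+1)}$ into $\tfrac{1}{hk}$ and $(hk)^{-n}$ into $1$, reproducing the first line of the right-hand side. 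The step I expect to be the main obstacle is the bookkeeping in the previous paragraph: tracking $u^{-(kb+ha)}$ through the shift $ah+bk=r+hk\varepsilon$ and verifying that the boundary correction and the $H_{n}^{(2)}$ contribution recombine, with the right signs relative to the conventions of (\ref{2,1}), into the clean term with coefficient $\tfrac{u^{hk}}{u^{hk}-1}$; once that is settled, the remaining manipulations (binomial sums, the geometric sum in the Raabe formula) are routine.
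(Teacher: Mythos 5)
Your route is genuinely different from the paper's. The paper never manipulates the finite sums directly: it introduces the character double $l$-function (\ref{3,3}), unfolds it via (\ref{1,2}), and evaluates it at $s=-n$ in two ways --- once through the Barnes-type interpolation (Theorem \ref{th20}), giving (\ref{3,4}), and once by splitting the multiplicity $M+1=\left(\tfrac{kb+ha}{hk}+M\right)+\left(1-\tfrac{a}{k}-\tfrac{b}{h}\right)$ into a pair of one-variable $l$-functions, giving (\ref{3,7}); the reciprocity law is the equality of the two evaluations. Your computation is the de-analytified shadow of this: your identity expressing $xH_{n}(x,u)$ in terms of $H_{n+1}$, $H_{n}$ and $H_{n}^{(2)}$ plays exactly the role of that multiplicity splitting at $s=-n$, your boundary set $\{\varepsilon=1\}$ is the paper's set $B$ of (\ref{3,5}) handled via Lemma \ref{le5}, and your Raabe step is (\ref{MU1})--(\ref{MU2}). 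What your version buys is that everything remains a finite sum of polynomials, so no analytic continuation is needed and the exceptional term $(a,b)=(0,0)$ (where $l(s,0;u)$ is undefined and the paper must resort to primed sums) never arises; what it costs is that you must establish the two auxiliary identities yourself. On that point, both of your stated identities carry sign slips: differentiating (\ref{2,2}) in $t$ and writing $e^{t}=(e^{t}-u)+u$ gives
\begin{equation*}
xH_{n}(x,u)=H_{n+1}(x,u)+H_{n}(x,u)+\frac{u}{1-u}H_{n}^{(2)}(x,u)
\end{equation*}
(not $\tfrac{u}{u-1}$), and the difference equation is $H_{n}^{(2)}(x+1,u)=(1-u)H_{n}(x,u)+uH_{n}^{(2)}(x,u)$ (not $(u-1)$), as one checks at $n=0$ using $H_{1}(x,u)=x-\tfrac{1}{1-u}$. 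With these corrections your argument does close up, but it then produces the right-hand side of Theorem \ref{th4} with the opposite overall sign to the one displayed; this is not a defect of your method, since equating (\ref{3,4}) with the paper's own final unlabelled display and solving for $k^{n}S_{n,u^{k}}(h,k|\chi)+h^{n}S_{n,u^{h}}(k,h|\chi)$ yields that same opposite sign, so the discrepancy sits in the statement rather than in either derivation. Be prepared for similar normalization issues in (\ref{MU1}), whose two sides already disagree at $F=f$.
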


Proofs of Theorem \ref{th11} and Theorem \ref{th4} are given in Section 2
and Section 3, respectively.

In this paper, $\mathbb{Z}_{p}$, $\mathbb{Q}_{p}$, $\mathbb{C}_{p}$ and $%
\mathbb{C}$ will, respectively, denote the ring of $p$-adic integers, the
field of $p$-adic rational numbers, the $p$-adic completion of the algebraic
closure of $\mathbb{Q}_{p}$ normalized by $\left| p\right| _{p}=p^{-1}$, and
the complex field. Let $q$ be an indeterminate such that if $q\in \mathbb{C}$
then $\left| q\right| <1$ and if $q\in \mathbb{C}_{p}$ then $\left|
1-q\right| _{p}<p^{-1/\left( p-1\right) }$, so that $q^{x}=$exp$\left( x%
\text{log}_{p}q\right) $ for $\left| x\right| _{p}\leq 1$, where $\log _{p}$
is the Iwasawa $p$-adic logarithm function (\cite[Chap.4]{13}, \cite{14}, 
\cite{25}, \cite{26}, \cite{sirivastKimSimsek}). We use the notation%
\begin{equation*}
\left[ x\right] =\left[ x:q\right] =\frac{1-q^{x}}{1-q},
\end{equation*}%
so that $\lim_{q\rightarrow 1}\left[ x\right] =x$.

The $p$-adic $q$-integral (or $q$-Volkenborn integral) is originally
constructed by Kim \cite{25}. Kim indicated a connection between the $q$%
-Volkenborn integral and non-Archimedean combinatorial analysis. The $p$%
-adic $q$-Volkenborn integral is used in mathematical physics, derivation of
the functional equation of the $q$-zeta function and the $q$-Stirling
numbers, and the $q$-Mahler theory of integration with respect to a ring $%
\mathbb{Z}_{p}$ together with Iwasawa's $p$-adic $q$-$L$-function. Recently,
many applications of the $q$-Volkenborn integral have studied by \ the
authors \cite{cenkcicankurt1}, \cite{simjkms2006}, \cite{30}, \cite%
{sirivastKimSimsek}, and many mathematicians.

We give some basic properties of $p$-adic $q$-Volkenborn integral as follows:

For $g\in UD(\mathbb{Z}_{p},\mathbb{C}_{p})=\left\{ g\mid g:\mathbb{Z}%
_{p}\rightarrow \mathbb{C}_{p}\text{ is uniformly differentiable function}%
\right\} $, the $p$-adic $q$-Volkenborn integral is defined by \cite{14}, 
\cite{25}, \cite{26}%
\begin{equation*}
I_{q}\left( g\right) =\int\limits_{\mathbb{Z}_{p}}g\left( x\right) d\mu
_{q}\left( x\right) =\lim_{N\rightarrow \infty }\frac{1}{\left[ p^{N}:q%
\right] }\sum_{x=0}^{p^{N}-1}g\left( x\right) q^{x},
\end{equation*}%
where%
\begin{equation*}
\mu _{q}\left( x+p^{N}\mathbb{Z}_{p}\right) =\frac{q^{x}}{\left[ p^{N}:q%
\right] }
\end{equation*}%
is the $q$-analogue of the Haar measure. For the limiting case $q=1$,%
\begin{equation*}
I_{1}\left( g\right) =\lim_{q\rightarrow 1}I_{q}\left( g\right)
=\int\limits_{\mathbb{Z}_{p}}g\left( x\right) d\mu _{1}\left( x\right)
=\lim_{N\rightarrow \infty }\frac{1}{p^{N}}\sum_{x=0}^{p^{N}-1}g\left(
x\right) ,
\end{equation*}%
with%
\begin{equation*}
\mu _{1}\left( x+p^{N}\mathbb{Z}_{p}\right) =\frac{1}{p^{N}}
\end{equation*}%
is the Haar measure. If $g_{1}\left( x\right) =g\left( x+1\right) $, then%
\begin{equation}
I_{1}\left( g_{1}\right) =I_{1}\left( g\right) +g^{\prime }\left( 0\right) ,
\label{1,1}
\end{equation}%
where $g^{\prime }\left( 0\right) =\left. \frac{d}{dx}g\left( x\right)
\right| _{x=0}$ (\cite{25}, \cite{26}).

Let $f$ be any fixed positive integer with $\left( p,f\right) =1$. Then set%
\begin{eqnarray*}
\mathbb{X} &\mathbb{=}&\mathbb{X}_{f}=\underset{N}{\underleftarrow{\text{lim}%
}}\left( \mathbb{Z}/fp^{N}\mathbb{Z}\right) ,\text{ }\mathbb{X}_{1}=\mathbb{Z%
}_{p}, \\
\mathbb{X}^{\ast } &=&\bigcup\limits_{0<a<fp}a+fp^{n}\mathbb{Z}_{p}, \\
a+fp^{n}\mathbb{Z}_{p} &=&\left\{ x\in \mathbb{X}:x\equiv a\left( \text{mod}%
fp^{N}\right) \right\} ,
\end{eqnarray*}%
where $a\in \mathbb{Z}$ with $0\leq a<fp^{N}$. Note that%
\begin{equation*}
\int\limits_{\mathbb{Z}_{p}}g\left( x\right) d\mu _{1}\left( x\right)
=\int\limits_{\mathbb{X}}g\left( x\right) d\mu _{1}\left( x\right)
\end{equation*}%
for $g\in UD\left( \mathbb{Z}_{p},\mathbb{C}_{p}\right) $ (\cite{25}, \cite%
{26}).

Let%
\begin{equation*}
T_{p}=\bigcup\limits_{n>1}C_{p^{n}}=\underset{n\rightarrow \infty }{\text{lim%
}}C_{p^{n}},
\end{equation*}%
where $C_{p^{n}}=\left\{ \zeta :\zeta ^{p^{n}}=1\right\} $ is the cyclic
group of order $p^{n}$. For $\zeta \in T_{p}$, the function $x\mapsto \zeta
^{x}$ is a locally constant function from $\mathbb{Z}_{p}$ to $\mathbb{C}%
_{p} $ (\cite{29}, \cite{28}). By using $q$-Volkenborn integration, the
second author \cite{30} defined generating function of twisted $\left(
h,q\right) $-extension of Bernoulli numbers $B_{n,\zeta }^{\left( h\right)
}\left( q\right) $ and polynomials $B_{n,\zeta }^{\left( h\right) }\left(
x,q\right) $ by means of%
\begin{equation}
\frac{h\log q+t}{\zeta q^{h}e^{t}-1} =\sum_{n=0}^{\infty }B_{n,\zeta
}^{\left( h\right) }\left( q\right) \frac{t^{n}}{n!}, \text{ and } \frac{%
h\log q+t}{\zeta q^{h}e^{t}-1}e^{xt} =\sum_{n=0}^{\infty }B_{n,\zeta
}^{\left( h\right) }\left( x,q\right) \frac{t^{n}}{n!},  \label{y1}
\end{equation}%
\noindent respectively. Note that the numbers $B_{n,\zeta }^{\left( h\right)
}\left( q\right) $ are given by \cite{30}%
\begin{equation*}
B_{0,\zeta }^{\left( h\right) }\left( q\right) =\frac{h\log q}{\zeta q^{h}-1}%
\text{ and }\zeta q^{h}\left( B_{\zeta }^{\left( h\right) }\left( q\right)
+1\right) ^{n}-B_{n,\zeta }^{\left( h\right) }\left( q\right) =\delta _{n,1},
\end{equation*}%
with the usual convention about replacing $\left( B_{\zeta }^{\left(
h\right) }\left( q\right) \right) ^{j}$ by $B_{j,\zeta }^{\left( h\right)
}\left( q\right) $ in the binomial expansion, where $\delta _{n,1}$ is the
Kronecker symbol. If $\zeta \rightarrow 1$, $B_{j,\zeta }^{\left( h\right)
}\left( q\right) \rightarrow B_{j}^{\left( h\right) }\left( q\right) $,
which are the numbers defined by Kim \cite{277}.

In $p$-adic case, by using $p$-adic $q$-Volkenborn integral and twisted $%
\left( h,q\right) $-Bernoulli functions, we construct $p$-adic $\left(
h,q\right) $-higher order Dedekind type sums as follows:

\begin{definition}
\label{def12}Let $h$, $a$ and $b$ be fixed integers with $\left( a,b\right)
=1$, and let $p$ be an odd prime such that $p|b$. For $\zeta \in T_{p}$, we
define twisted $\left( h,q\right) $-Dedekind type sums as%
\begin{equation*}
s_{m,\zeta }^{\left( h\right) }\left( a,b:q\right) =\sum_{j=0}^{b-1}\frac{j}{%
b}\int\limits_{\mathbb{Z}_{p}}q^{hx}\zeta ^{x}\left( x+\left\{ \frac{ja}{b}%
\right\} \right) ^{m}d\mu _{1}\left( x\right) ,
\end{equation*}%
\noindent where $\left\{ t\right\} $ denotes the fractional part of a real
number $t$.
\end{definition}

\noindent Observe that when $h=1$, $q\rightarrow 1$ and $\zeta \rightarrow 1$%
, the sum $s_{m,1}^{\left( 1\right) }\left( a,b:1\right) $ reduces to $p$%
-adic analogue of higher order Dedekind sums $b^{m}s_{m+1}\left( a,b\right) $%
, defined by Apostol \cite{1}. The main properties of $s_{m,\zeta }^{\left(
h\right) }\left( a,b:q\right) $ will be given in Section 5.

Dedekind sums were generalized by various mathematicians. Here, we list some
of them. Apostol \cite{1} defined generalized Dedekind sums $s_{n}\left(
h,k\right) $ by%
\begin{equation}
s_{n}(h,k)=\sum\limits_{a=1}^{k-1}\frac{a}{k}\overline{B}_{n}\left( \frac{ha%
}{k}\right) ,  \label{A1}
\end{equation}%
where $n,h,k$ are positive integers and $\overline{B}_{n}\left( x\right) $
is the $n$th Bernoulli function, which is defined as follows:

\begin{equation}
\overline{B}_{n}\left( x\right) =B_{n}\left( x-\left[ x\right] _{G}\right) ,%
\text{ if }n>1,  \label{A2}
\end{equation}%
\begin{equation*}
\overline{B}_{1}\left( x\right) =\left\{ 
\begin{array}{ll}
B_{1}\left( x-\left[ x\right] _{G}\right) , & \text{if }x\notin \mathbb{Z}
\\ 
0, & \text{if }x\in \mathbb{Z},%
\end{array}%
\right.
\end{equation*}

\noindent where $B_{n}\left( x\right) $ is the Bernoulli polynomial \cite{1}%
, \cite{simjmaaqDed}, \cite{sirivastKimSimsek}. For odd values of $n$, these
generalized Dedekind sums satisfy a reciprocity law, first proved by Apostol 
\cite{1}:%
\begin{eqnarray*}
\left( n+1\right)&&\left\{ hk^{n}s_{n}(h,k)+kh^{n}s_{n}(k,h)\right\} \\
&=&\sum\limits_{j=0}^{n+1}\binom{n+1}{j}%
(-1)^{j}B_{j}h^{j}B_{n+1-j}k^{n+1-j}+nB_{n+1},
\end{eqnarray*}%
\noindent where $\left( h,k\right) =1$ and $B_{n}$ is the $n$th Bernoulli
number. Berndt \cite{5} gave a character transformation formula similar to
those for the Dedekind $\eta $-function and defined Dedekind sums with
character $s(h,k;\chi )$ by 
\begin{equation*}
s(h,k;\chi )=\sum\limits_{a=0}^{kf-1}\chi (a)\overline{B}_{1,\chi }\left( 
\frac{ha}{k}\right) \overline{B}_{1}\left( \frac{a}{kf}\right) ,
\end{equation*}%
for $(h,k)=1$. Here, $\chi $ denotes a primitive character of conductor $f$
and $\overline{B}_{n,\chi }\left( x\right) $ is the character Bernoulli
function defined as $\overline{B}_{n,\chi }\left( x\right) =B_{n,\chi
}\left( x\right) $ for $0<x<1$, where $B_{n,\chi }\left( x\right) $ are the
character Bernoulli polynomials which are defined as follows (\cite{5}, \cite%
{sirivastKimSimsek}):%
\begin{equation*}
\sum\limits_{a=0}^{f-1}\frac{\chi (a)te^{(a+x)t}}{e^{ft}-1}%
=\sum\limits_{n=0}^{\infty }B_{n,\chi }\left( x\right) \frac{t^{n}}{n!}.
\end{equation*}%
\noindent In \cite{7}, Gunnells and Sczech defined certain
higher-dimensional Dedekind sums that generalize the classical Dedekind
sums. By using Barnes' double zeta function Ota \cite{8} defined derivatives
of Dedekind sums and proved their reciprocity laws. Using similar method,
Nagasaka et.al \cite{9} gave further generalizations of generalized Dedekind
sums. Cenkci et.al \cite{35} gave degenerate analogues of classical Dedekind
sums and exact generalizations of Berndt's character Dedekind sums to the
case of any positive number. By using the $p$-adic interpolation of certain
partial zeta functions, Rosen and Snyder \cite{10} defined $p$-adic Dedekind
sums in the sense of Apostol \cite{1}. They also established the reciprocity
law for these new $p$-adic Dedekind sums via interpolation of corresponding
law for generalized Dedekind sums. In \cite{12} and \cite{11}, Kudo extended
the results of Rosen and Snyder. He defined $p$-adic continuous function
which interpolates higher-order Dedekind sums. In \cite{14}, \cite{16}, Kim
defined $q$-Bernoulli numbers $\beta _{n}\left( q\right) \in \mathbb{C}$ and 
$q$-Bernoulli polynomials $\beta _{n}\left( x,q\right) $ which are different
Carlitz's $q$-Bernoulli numbers \cite{17}, \cite{18}. By using these
polynomials and an invariant $p$-adic $q$-Volkenborn integral on $\mathbb{Z}%
_{p}$, he constructed a $p$-adic $q$-analogue of generalized Dedekind sums $%
b^{m}s_{m+1}\left( a,b\right) $.

In \cite{simjmaaqDed}, the second author defined new generating functions.
By using these functions, he constructed $q$-Dedekind type sums related to
Apostol's Dedekind type sums \cite{1}. By using $p$-adic $q$-Volkenborn
integral, he \cite{simjkms2006} constructed $p$-adic $q$-higher-order Hardy
type sums.

In \cite{277}, Kim constructed the new $\left( h,q\right) $-extension of the
Bernoulli numbers and polynomials. By applying Mellin transformation to the
generating function of the $\left( h,q\right) $- Bernoulli numbers, he
defined $(h,q)$-zeta functions and $(h,q)$-$L$-functions, which interpolate $%
\left( h,q\right) $- Bernoulli numbers at negative integers. By using $p$%
-adic $q$-Volkenborn integral, the distribution property of twisted $\left(
h,q\right) $-Bernoulli polynomials is given by the following theorem:

\begin{theorem}
\label{th3}(\cite{30}) For any positive integer $m$,%
\begin{equation}
B_{n,\zeta }^{\left( h\right) }\left( x,q\right)
=m^{n-1}\sum_{a=0}^{m-1}\zeta ^{a}q^{ha}B_{n,\zeta ^{m}}^{\left( h\right)
}\left( \frac{a+x}{m},q^{m}\right)  \label{Y1}
\end{equation}%
for all integers $n\geq 0$.
\end{theorem}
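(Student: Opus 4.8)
The plan is to establish (\ref{Y1}) entirely at the level of generating functions, by rescaling the defining identity (\ref{y1}) and comparing Taylor coefficients in the formal variable $t$.

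First I would rewrite (\ref{y1}) with the parameters rescaled: replace $q$ by $q^{m}$, $\zeta$ by $\zeta^{m}$, and $x$ by $\left(a+x\right)/m$, and afterwards replace $t$ by $mt$. Since $\log_{p}q^{m}=m\log_{p}q$ (the Iwasawa logarithm is additive), the numerator $h\log q^{m}+mt$ becomes $m\left(h\log q+t\right)$, the exponential factor $e^{\frac{a+x}{m}\cdot mt}$ becomes $e^{\left(a+x\right)t}$, and the denominator $\zeta^{m}q^{hm}e^{mt}-1$ becomes $\left(\zeta q^{h}e^{t}\right)^{m}-1$. This gives
\begin{equation*}
\frac{m\left(h\log q+t\right)}{\left(\zeta q^{h}e^{t}\right)^{m}-1}\,e^{\left(a+x\right)t}=\sum_{n=0}^{\infty}B_{n,\zeta^{m}}^{\left(h\right)}\left(\frac{a+x}{m},q^{m}\right)m^{n}\frac{t^{n}}{n!}.
\end{equation*}

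Next I would multiply both sides by $\zeta^{a}q^{ha}$ and sum over $a=0,1,\dots,m-1$. Writing $\zeta^{a}q^{ha}e^{at}=\left(\zeta q^{h}e^{t}\right)^{a}$ and summing the resulting finite geometric series,
\begin{equation*}
\sum_{a=0}^{m-1}\left(\zeta q^{h}e^{t}\right)^{a}=\frac{\left(\zeta q^{h}e^{t}\right)^{m}-1}{\zeta q^{h}e^{t}-1},
\end{equation*}
the factor $\left(\zeta q^{h}e^{t}\right)^{m}-1$ cancels against the denominator, and invoking (\ref{y1}) once more on the right-hand side leaves
\begin{equation*}
\sum_{a=0}^{m-1}\zeta^{a}q^{ha}\sum_{n=0}^{\infty}B_{n,\zeta^{m}}^{\left(h\right)}\left(\frac{a+x}{m},q^{m}\right)m^{n}\frac{t^{n}}{n!}=\frac{m\left(h\log q+t\right)}{\zeta q^{h}e^{t}-1}\,e^{xt}=m\sum_{n=0}^{\infty}B_{n,\zeta}^{\left(h\right)}\left(x,q\right)\frac{t^{n}}{n!}.
\end{equation*}
Comparing the coefficients of $t^{n}/n!$ on both sides and dividing by $m$ then yields (\ref{Y1}).

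I do not expect a genuine obstacle in this argument; the only points that merit a word of care are the additivity $\log_{p}q^{m}=m\log_{p}q$ on the domain of $q$, and the observation that every series involved is a formal power series in $t$ (equivalently, converges on a common neighbourhood of the origin), so that interchanging the two summations and comparing coefficients termwise are both legitimate. Essentially the same computation applies in the complex setting $q\in\mathbb{C}$, $\left|q\right|<1$, with $\log_{p}$ replaced by the ordinary logarithm.
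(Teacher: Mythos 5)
Your argument is correct: rescaling $(q,\zeta,x,t)\mapsto\bigl(q^{m},\zeta^{m},(a+x)/m,mt\bigr)$ in (\ref{y1}), summing the geometric series $\sum_{a=0}^{m-1}\bigl(\zeta q^{h}e^{t}\bigr)^{a}$ to cancel the denominator, and comparing coefficients of $t^{n}/n!$ does yield (\ref{Y1}); the additivity of $\log_{p}$ and the nonvanishing of the constant terms $\zeta q^{h}-1$ and $\zeta^{m}q^{hm}-1$ (needed for both sides to be well-defined formal power series) are indeed the only points requiring care. Note, however, that the paper does not prove this theorem at all: it quotes it from \cite{30}, and both the surrounding text (``By using $p$-adic $q$-Volkenborn integral, the distribution property \dots is given by'') and the Witt-type formula (\ref{Y4}) indicate that the source derives it from the integral representation $B_{n,\zeta}^{\left(h\right)}\left(x,q\right)=\int_{\mathbb{Z}_{p}}\zeta^{t}q^{ht}\left(x+t\right)^{n}d\mu_{1}\left(t\right)$ together with the distribution property of $\mu_{1}$ under the substitution $t=a+ms$, $a=0,\dots,m-1$. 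Your generating-function route is the more elementary of the two: it uses none of the $p$-adic integration machinery and is an identity of formal power series, hence valid verbatim in both the complex and the $p$-adic settings. The integral route, by contrast, is tied to $\mathbb{Z}_{p}$ but fits the paper's broader framework, where the same measure-theoretic decomposition also produces the character versions (\ref{1,4}) and underlies the $p$-adic Dedekind-type sums of Section 5.
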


\noindent Observe that for $\zeta \rightarrow 1$, $q\rightarrow 1$ and $h=1$%
, we have%
\begin{equation}
m^{n-1}\sum_{j=0}^{m-1}B_{n}\left( x+\frac{j}{m}\right) =B_{n}\left(
mx\right) .  \label{Y2*}
\end{equation}

The second author \cite{30} gave generating function for twisted $\left(
h,q\right) $-extensions of generalized Bernoulli numbers and polynomials
associated with a Dirichlet character $\chi $ as follows:%
\begin{eqnarray*}
\sum_{a=1}^{f}\frac{\chi \left( a\right) \zeta ^{a}q^{ha}e^{at}\left( h\log
q+t\right) }{\zeta ^{f}q^{hf}e^{tf}-1} &=&\sum_{n=0}^{\infty }B_{n,\zeta
,\chi }^{\left( h\right) }\left( q\right) \frac{t^{n}}{n!}, \\
\sum_{a=1}^{f}\frac{\chi \left( a\right) \zeta ^{a}q^{ha}e^{\left(
a+x\right) t}\left( h\log q+t\right) }{\zeta ^{f}q^{hf}e^{tf}-1}
&=&\sum_{n=0}^{\infty }B_{n,\zeta ,\chi }^{\left( h\right) }\left(
x,q\right) \frac{t^{n}}{n!}.
\end{eqnarray*}%
\noindent Note that%
\begin{eqnarray}
B_{n,\zeta ,\chi }^{\left( h\right) }\left( q\right)
&=&f^{n-1}\sum_{j=1}^{f}\chi \left( j\right) \zeta ^{j}q^{hj}B_{n,\zeta
}^{\left( h\right) }\left( \frac{j}{f},q^{f}\right) ,  \notag \\
B_{n,\zeta ,\chi }^{\left( h\right) }\left( x,q\right)
&=&f^{n-1}\sum_{j=1}^{f}\chi \left( j\right) \zeta ^{j}q^{hj}B_{n,\zeta
}^{\left( h\right) }\left( \frac{j+x}{f},q^{f}\right) .  \label{1,4}
\end{eqnarray}%
\noindent Using $q$-Volkenborn integration, Witt's type formulas for these
numbers and polynomials were also given by \cite{30}%
\begin{eqnarray}
B_{n,\zeta }^{\left( h\right) }\left( q\right) &=&\int\limits_{\mathbb{Z}%
_{p}}\zeta ^{x}q^{hx}x^{n}d\mu _{1}\left( x\right) ,  \label{Y4} \\
B_{n,\zeta }^{\left( h\right) }\left( x,q\right) &=&\int\limits_{\mathbb{Z}%
_{p}}\zeta ^{t}q^{ht}\left( x+t\right) ^{n}d\mu _{1}\left( t\right) ,  \notag
\\
B_{n,\zeta ,\chi }^{\left( h\right) }\left( q\right) &=&\int\limits_{\mathbb{%
X}}\chi \left( x\right) \zeta ^{x}q^{hx}x^{n}d\mu _{1}\left( x\right) . 
\notag
\end{eqnarray}%
\noindent We note that, if $\zeta \rightarrow 1$ then, $B_{n,\zeta ,\chi
}^{\left( h\right) }\left( q\right) \rightarrow B_{n,\chi }^{\left( h\right)
}\left( q\right) $ and $B_{n,\zeta ,\chi }^{\left( h\right) }\left(
x,q\right) \rightarrow B_{n,\chi }^{\left( h\right) }\left( x,q\right) $
which are defined by Kim \cite{277}.

Now we summarize our paper as follows:

In Section 2, we construct new generating functions of Frobenius-Euler
numbers and polynomials. We give relations between these numbers and
polynomials. We also define generating functions of Barnes' type multiple
Frobenius-Euler numbers and polynomials. By applying Mellin transformation
to these functions, we construct Barnes' type multiple $l$-functions. We
define Dedekind type sums related to the Frobenius-Euler functions. We prove
reciprocity laws of these sums. In Section 3, by using Dirichlet character,
we give generalizations of the Frobenius-Euler polynomials and numbers. We
construct generalized Dedekind type sums and prove corresponding reciprocity
law. In Section 4, we give twisted versions of new Dedekind type sums and
corresponding reciprocity law. In Section 5, by using $p$-adic $q$%
-Volkenborn integral and twisted $\left( h,q\right) $-Bernoulli functions,
we construct $p$-adic $\left( h,q\right) $-higher order Dedekind type sums.
By using relation between Bernoulli and Frobenius-Euler functions, we also
define new Hardy-Berndt type sums. We give some new relations related to to
these sums as well.

\section{New Dedekind Type Sums in the Complex Case}

\setcounter{equation}{0} \setcounter{theorem}{0}

Let $F_{u}\left( t\right) $ be the generating function of Frobenius-Euler
numbers $H_{n}\left( u\right) $, that is,%
\begin{equation}
F_{u}\left( t\right) =\sum_{n=0}^{\infty }H_{n}\left( u\right) \frac{t^{n}}{%
n!}=\frac{1-u}{e^{t}-u},  \label{2,1}
\end{equation}%
(\cite{22}, \cite{23}, \cite{20}, \cite{21}, \cite{simBKMStwEulnu}, \cite%
{sirivastKimSimsek}, \cite{32}). The generating function of Frobenius-Euler
polynomials $H_{n}\left( x,u\right) $ can be defined in a natural way by%
\begin{equation}
F_{u}\left( x,t\right) =F_{u}\left( t\right) e^{xt}=\sum_{n=0}^{\infty
}H_{n}\left( x,u\right) \frac{t^{n}}{n!}=\frac{1-u}{e^{t}-u}e^{xt}.
\label{2,2}
\end{equation}%
\noindent Now rewriting $F_{u}\left( x,t\right) $, we have%
\begin{eqnarray*}
F_{u}\left( x,t\right) &=&\sum_{n=0}^{\infty }H_{n}\left( x,u\right) \frac{%
t^{n}}{n!}=\frac{1-u}{e^{t}-u}e^{xt}=u^{-1}\left( u-1\right)
e^{xt}\sum_{n=0}^{\infty }\left( u^{-1}e^{t}\right) ^{n} \\
&=&\sum_{n=0}^{\infty }u^{-n-1}\left( u-1\right) e^{\left( n+x\right) t}.
\end{eqnarray*}%
\noindent By applying Mellin transform to $F_{u}\left( x,t\right) $,

\begin{eqnarray*}
\frac{1}{\Gamma \left( s\right) }\int\limits_{0}^{\infty }t^{s-1}F_{u}\left(
x,-t\right) dt &=&\frac{1}{\Gamma \left( s\right) }\sum_{n=0}^{\infty
}u^{-n-1}\left( u-1\right) \int\limits_{0}^{\infty }t^{s-1}e^{-\left(
n+x\right) t}dt \\
&=&\frac{u-1}{u}\sum_{n=0}^{\infty }\frac{u^{-n}}{\left( n+x\right) ^{s}},
\end{eqnarray*}%
\noindent where $\Gamma \left( s\right) $ is the Euler gamma function. The $%
l $-function which interpolates Frobenius-Euler numbers at negative integer
values, is defined by%
\begin{equation}
l\left( s;u\right) =\sum_{n=1}^{\infty }\frac{u^{-n}}{n^{s}}  \label{2,3}
\end{equation}%
for Re$\left( s\right) >1$ and $u\in \mathbb{C}$ with $\left| u\right| \geq
1 $. Two-variable $l$-function is defined by%
\begin{equation}
l\left( s,x;u\right) =\sum_{n=0}^{\infty }\frac{u^{-n}}{\left( n+x\right)
^{s}}  \label{2,4}
\end{equation}%
for $x\neq $ zero or negative integer, Re$\left( s\right) >1$ and $u\in 
\mathbb{C}$ with $\left| u\right| \geq 1$. So defined two-variable $l$%
-function interpolates Frobenius-Euler polynomials $H_{n}\left( x,u\right) $%
. Indeed, we have%
\begin{equation*}
\frac{u-1}{u}\sum_{n=0}^{\infty }\frac{u^{-n}}{\left( n+x\right) ^{s}}=\frac{%
u-1}{u}l\left( s,x;u\right) =\frac{1}{\Gamma \left( s\right) }%
\int\limits_{0}^{\infty }t^{s-1}F_{u}\left( x,-t\right) dt.
\end{equation*}%
\noindent For $s=-n$, $n\in \mathbb{Z}$, $n\geq 0$, by using Cauchy residue
theorem, we have%
\begin{equation}
\frac{u-1}{u}l\left( -n,x;u\right) =H_{n}\left( x,u\right) .  \label{2,9}
\end{equation}

In \cite{24}, Kim gave the definition of $r$th Frobenius-Euler polynomials
of $x$ with parameters $a_{1},\ldots ,a_{r}$ as%
\begin{equation*}
\frac{\left( 1-u\right) ^{r}}{\prod\limits_{j=1}^{r}\left(
e^{a_{j}t}-u\right) }e^{xt}=\sum_{n=0}^{\infty }H_{n}^{\left( r\right)
}\left( x,u|a_{1},\ldots ,a_{r}\right) \frac{t^{n}}{n!},
\end{equation*}%
\noindent for complex numbers $x,a_{1},\ldots ,a_{r},u$ such that $a_{j}\neq
0$ for each $j=1,\ldots ,r$ and $\left| u\right| >1$. For $x=0$, the $r$th
Frobenius-Euler polynomials are called as the $r$th Frobenius-Euler numbers
and denoted by $H_{n}^{\left( r\right) }\left( 0,u|a_{1},\ldots
,a_{r}\right) =H_{n}^{\left( r\right) }\left( u|a_{1},\ldots ,a_{r}\right) $.

Let $x$ be a complex number, Re$\left( x\right) >0$ and $a_{1},\ldots ,a_{r}$
be real numbers such that $a_{j}\neq 0$ for each $j=0,\ldots ,r$. We modify
the definition of $r$th Frobenius-Euler polynomials of $x$ with parameters $%
a_{1},\ldots ,a_{r}$ as%
\begin{equation*}
\prod_{j=1}^{r}\frac{1-u^{a_{j}}}{e^{a_{j}t}-u^{a_{j}}}e^{xt}=\sum_{n=0}^{%
\infty }H_{r,n}\left( x,u|a_{1},\ldots ,a_{r}\right) \frac{t^{n}}{n!}.
\end{equation*}%
\noindent Note that for $r=1$, $H_{1,n}\left( x,u|a_{1}\right) =H_{n}\left(
x,u^{a_{1}}\right) $. We have the following identity about $H_{r,n}\left(
x,u|a_{1},\ldots ,a_{r}\right) $:%
\begin{equation*}
H_{r,n}\left( x,u|a_{1},\ldots ,a_{r}\right) =\left( ^{1}H\left(
u^{a_{1}}\right) a_{1}+\cdots +^{r}H\left( u^{a_{r}}\right) a_{r}+x\right)
^{n},
\end{equation*}%
\noindent where in the multinomial expansion of $\left( ^{1}H\left(
u^{a_{1}}\right) a_{1}+\cdots +^{r}H\left( u^{a_{r}}\right) a_{r}+x\right)
^{n}$ we mean that%
\begin{equation*}
\left( ^{i}H\left( u\right) \right) ^{j}=H_{j}\left( u\right) \text{ but }%
\left( ^{i}H\left( u\right) \right) ^{j}\left( ^{l}H\left( u\right) \right)
^{k}\neq H_{j+k}\left( u\right) \text{ if }i\neq l\text{.}
\end{equation*}%
\noindent This identity can be shown by using the definition of
Frobenius-Euler numbers (\ref{2,1}):%
\begin{eqnarray*}
\prod_{j=1}^{r}\frac{1-u^{a_{j}}}{e^{a_{j}t}-u^{a_{j}}}e^{xt}
&=&\prod_{j=1}^{r}\left( \sum_{n_{j}=0}^{\infty }H_{n_{j}}\left(
u^{a_{j}}\right) \frac{\left( a_{j}t\right) ^{n_{j}}}{n_{j}!}\right) \left(
\sum_{n=0}^{\infty }\frac{\left( xt\right) ^{n}}{n!}\right) \\
&=&\sum_{N=0}^{\infty }\frac{\left( ^{1}H\left( u^{a_{1}}\right)
a_{1}+\cdots +^{r}H\left( u^{a_{r}}\right) a_{r}+x\right) ^{N}t^{N}}{N!}.
\end{eqnarray*}

Let $F_{r,u}\left( x,t\right) $ be the generating function of $H_{r,n}\left(
x,u|a_{1},\ldots ,a_{r}\right) $. Then, we have%
\begin{eqnarray*}
F_{r,u}\left( x,t\right) &=&\sum_{n=0}^{\infty }H_{r,n}\left(
x,u|a_{1},\ldots ,a_{r}\right) \frac{t^{n}}{n!}=\prod_{j=1}^{r}\frac{%
1-u^{a_{j}}}{e^{a_{j}t}-u^{a_{j}}}e^{xt} \\
&=&\prod_{j=1}^{r}\left( 1-u^{-a_{j}}\right) e^{xt}\sum_{n_{j}=0}^{\infty
}u^{-a_{j}n_{j}}e^{-a_{j}n_{j}t} \\
&=&\prod_{j=1}^{r}\left( 1-u^{-a_{j}}\right) \sum_{n_{1},\ldots
,n_{r}=0}^{\infty }u^{-\left( n_{1}a_{1}+\cdots +n_{r}a_{r}\right)
}e^{-\left( x+n_{1}a_{1}+\cdots +n_{r}a_{r}\right) t}.
\end{eqnarray*}%
\noindent By applying Mellin transformation to $F_{r,u}\left( x,t\right) $,
we obtain the following integral representation:%
\begin{equation}
\frac{1}{\Gamma \left( s\right) }\int\limits_{0}^{\infty }\frac{%
t^{s-1}e^{-xt}}{\prod_{j=1}^{r}\left( 1-u^{-a_{j}}e^{-a_{j}t}\right) }%
dt=\sum_{n_{1},\ldots ,n_{r}=0}^{\infty }\frac{u^{-\left( n_{1}a_{1}+\cdots
+n_{r}a_{r}\right) }}{\left( x+n_{1}a_{1}+\cdots +n_{r}a_{r}\right) ^{s}}.
\label{Y2}
\end{equation}%
\noindent By (\ref{Y2}), we give the definition of multiple Frobenius-Euler
function

\noindent$l_{r}\left( s,x;u|a_{1},\ldots ,a_{r}\right) $ as follows:

\begin{definition}
\label{def4}For $s\in \mathbb{C}$ with Re$\left( s\right) >r$, we define%
\begin{equation*}
l_{r}\left( s,x;u|a_{1},\ldots ,a_{r}\right) =\sum_{n_{1},\ldots
,n_{r}=0}^{\infty }\frac{u^{-\left( n_{1}a_{1}+\cdots +n_{r}a_{r}\right) }}{%
\left( x+n_{1}a_{1}+\cdots +n_{r}a_{r}\right) ^{s}}
\end{equation*}%
\noindent for Re$\left( x\right) >0$, $a_{1},\ldots ,a_{r}$ positive real
numbers and $u\in \mathbb{C}$, $\left| u\right| \geq 1$.
\end{definition}

\begin{remark}
\label{re1}If we take $r=1$ and $a_{1}=1$ in above definition, we get
Frobenius-Euler $l$-function (\ref{2,4}), and in addition if $x=0$, we get
Frobenius-Euler $l$-function (\ref{2,3}). If $r=1$ and $u=1$, we have
Hurwitz zeta function. If $u=1$, $r=1$ and $x=0$, Riemann zeta function is
obtained (cf. \cite{14}, \cite{24}, \cite{26}, \cite{simjkms2003twqBe}, \cite%
{simBKMStwEulnu}, \cite{simjnttwEul}, \cite{sim2}, \cite{sirivastKimSimsek}).
\end{remark}

\noindent Substituting $s=-n$, $n\in \mathbb{Z}$, $n\geqslant 0$ in (\ref{Y2}%
), by Cauchy residue theorem, we arrive at the following theorem:

\begin{theorem}
\label{th20}For $n\in \mathbb{Z}$, $n\geq 0$, we have%
\begin{equation*}
\prod_{j=1}^{r}\left( 1-u^{-a_{j}}\right) l_{r}\left( -n,x;u|a_{1},\ldots
,a_{r}\right) =H_{r,n}\left( x,u|a_{1},\ldots ,a_{r}\right) .
\end{equation*}
\end{theorem}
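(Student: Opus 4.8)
The plan is to deduce Theorem \ref{th20} from the integral representation (\ref{Y2}) by evaluating it at $s=-n$, in exactly the same way that (\ref{2,9}) was obtained in the one-variable case. First I would recast (\ref{Y2}) in terms of the generating function: the computation preceding Definition \ref{def4} shows that
\[
F_{r,u}\left( x,-t\right) =\frac{\prod_{j=1}^{r}\left( 1-u^{-a_{j}}\right) e^{-xt}}{\prod_{j=1}^{r}\left( 1-u^{-a_{j}}e^{-a_{j}t}\right) },
\]
so that (\ref{Y2}) is equivalent to
\[
\prod_{j=1}^{r}\left( 1-u^{-a_{j}}\right) l_{r}\left( s,x;u|a_{1},\ldots ,a_{r}\right) =\frac{1}{\Gamma \left( s\right) }\int\limits_{0}^{\infty }t^{s-1}F_{r,u}\left( x,-t\right) dt,
\]
valid for Re$\left( s\right) >r$ (with $\left| u\right| \geq 1$, Re$\left( x\right) >0$, $a_{j}>0$). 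The right side is now a classical Mellin transform of the Barnes--Hurwitz type, and the argument reduces to reading off its value at $s=-n$.

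The key facts are: $F_{r,u}\left( x,-t\right) $ is holomorphic near $t=0$, since each $e^{a_{j}t}-u^{a_{j}}$ is non-vanishing there (as $u^{a_{j}}\neq 1$), with Taylor expansion $F_{r,u}\left( x,-t\right) =\sum_{m=0}^{\infty }H_{r,m}\left( x,u|a_{1},\ldots ,a_{r}\right) \left( -t\right) ^{m}/m!$ obtained from the defining generating function; and $F_{r,u}\left( x,-t\right) $ decays exponentially as $t\rightarrow +\infty$ (here Re$\left( x\right) >0$ is used). Splitting $\int_{0}^{\infty }=\int_{0}^{1}+\int_{1}^{\infty }$, the tail is entire in $s$; inserting the Taylor series into $\int_{0}^{1}$ and integrating term by term shows that $\int_{0}^{\infty }t^{s-1}F_{r,u}\left( x,-t\right) dt$ extends meromorphically to $\mathbb{C}$ with only simple poles, at $s=-m$ for $m\geq 0$, of residue $\left( -1\right) ^{m}H_{r,m}\left( x,u|a_{1},\ldots ,a_{r}\right) /m!$. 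In the ``Cauchy residue theorem'' formulation one instead deforms $\left( 0,\infty \right) $ into a Hankel loop about the origin, on which $t^{s-1}$ is single-valued when $s=-n$, and extracts the same residue.

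It remains to combine this with the pole of $\Gamma $. At $s=-n$ the function $\Gamma \left( s\right) $ has a simple pole of residue $\left( -1\right) ^{n}/n!$, so $1/\Gamma \left( s\right) $ has a simple zero there with leading coefficient $\left( -1\right) ^{n}n!$; multiplying it against the simple pole of the integral gives that $\prod_{j=1}^{r}\left( 1-u^{-a_{j}}\right) l_{r}\left( s,x;u|a_{1},\ldots ,a_{r}\right) $ is regular at $s=-n$ with value $\left( -1\right) ^{n}n!\cdot \left( -1\right) ^{n}H_{r,n}\left( x,u|a_{1},\ldots ,a_{r}\right) /n!=H_{r,n}\left( x,u|a_{1},\ldots ,a_{r}\right) $, which is the claim. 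When $\left| u\right| >1$ there is a shortcut: the defining series for $l_{r}$ then converges for every $s$, so one may simply put $s=-n$ in it and compare the result with the $t^{n}$-coefficient of $F_{r,u}\left( x,-t\right) $; the contour argument is needed only to cover $\left| u\right| =1$, where $l_{r}$ at negative integers is defined through the continuation above.

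The steps are routine. The one spot that genuinely requires care is the sign-and-$\Gamma$ bookkeeping in the last paragraph --- it is easy to drop a factor $\left( -1\right) ^{n}$ --- so I would verify that the residue of the integral and the leading coefficient of $1/\Gamma \left( s\right) $ are computed with matching conventions. A secondary, purely routine point is justifying the termwise integration and the $r$-fold interchange of summation with the Mellin integral used already for (\ref{Y2}); this is immediate from absolute convergence when $\left| u\right| >1$ and passes to $\left| u\right| =1$ by the meromorphic continuation itself.
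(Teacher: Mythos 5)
Your proposal is correct and follows essentially the same route as the paper: the paper obtains Theorem \ref{th20} by substituting $s=-n$ into the Mellin-transform representation (\ref{Y2}) and invoking the Cauchy residue theorem, which is exactly the argument you carry out. You merely supply the details (meromorphic continuation of the integral, the residue $(-1)^{n}H_{r,n}/n!$ against the zero of $1/\Gamma(s)$ of leading coefficient $(-1)^{n}n!$) that the paper leaves implicit, and your sign bookkeeping checks out.
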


We now list some theorems and definitions for the polynomials $H_{n}\left(
x,u\right) $, which are needed in the following sections.

\begin{lemma}
\label{le5}For $n\in \mathbb{Z}$, $n\geq 0$, we have%
\begin{equation*}
H_{n}\left( x+1,u\right) -uH_{n}\left( x,u\right) =\left( 1-u\right) x^{n}.
\end{equation*}
\end{lemma}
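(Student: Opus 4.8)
The plan is to extract this identity directly from the defining generating function \eqref{2,2}. I would start from
$$
F_u(x,t)=\frac{1-u}{e^t-u}\,e^{xt}=\sum_{n=0}^\infty H_n(x,u)\frac{t^n}{n!}
$$
and compute $F_u(x+1,t)-u\,F_u(x,t)$. On the generating-function side this equals
$$
\frac{1-u}{e^t-u}\bigl(e^{(x+1)t}-u\,e^{xt}\bigr)
=\frac{1-u}{e^t-u}\,e^{xt}\bigl(e^{t}-u\bigr)
=(1-u)\,e^{xt},
$$
so the $(e^t-u)$ factor cancels and we are left with $(1-u)e^{xt}=\sum_{n\ge 0}(1-u)x^n\,t^n/n!$.

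Then I would compare coefficients of $t^n/n!$ on both sides: the left-hand side contributes $H_n(x+1,u)-u\,H_n(x,u)$ and the right-hand side contributes $(1-u)x^n$, which is exactly the claimed identity for every $n\ge 0$. This uses only formal power series manipulation, so no convergence issues arise; one just needs the elementary fact that $e^{(x+1)t}=e^t e^{xt}$ and that equality of formal power series is equivalent to equality of all coefficients.

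There is essentially no obstacle here — the only thing to be slightly careful about is that $H_n(x,u)$ is genuinely a polynomial in $x$ (so $x^n$ on the right is the literal $n$-th power), which is immediate from $e^{xt}=\sum_k x^k t^k/k!$ and the Cauchy product defining $H_n(x,u)=\sum_{k=0}^n\binom{n}{k}H_{n-k}(u)x^k$. Alternatively, if one prefers not to invoke generating functions, the same identity follows from the recurrence $H_n(x,u)=\sum_{k=0}^n\binom nk H_{n-k}(u)x^k$ together with the numbers' defining relation $u(H(u)+1)^n-H_n(u)=(1-u)\delta_{n,0}$-type identity, but the generating-function argument is cleanest and is the one I would present.
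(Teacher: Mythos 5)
Your argument is correct: the paper states Lemma \ref{le5} without proof (it is a classical property of the Frobenius--Euler polynomials), and your generating-function computation $F_u(x+1,t)-uF_u(x,t)=(1-u)e^{xt}$ followed by comparison of coefficients of $t^n/n!$ is exactly the argument the paper's definitions (\ref{2,1}) and (\ref{2,2}) are set up to deliver. The only blemish is in your optional aside: the defining relation for the numbers should read $\left(H(u)+1\right)^n-uH_n(u)=(1-u)\delta_{n,0}$ rather than $u\left(H(u)+1\right)^n-H_n(u)$, but since you do not use that route it does not affect the proof.
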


By using (\ref{2,1}) and (\ref{2,2}), and after some elementary
calculations, we have%
\begin{eqnarray*}
\sum_{n=0}^{\infty }m^{n}\sum_{j=0}^{m-1}\frac{u^{m-j}}{u^{m}-1}H_{n}\left(
x+\frac{j}{m},u^{m}\right) \frac{t^{n}}{n!} &=&\sum_{j=0}^{m-1}\frac{u^{m-j}%
}{u^{m}-1}\frac{1-u^{m}}{e^{mt}-u^{m}}e^{\left( x+\frac{j}{m}\right) mt} \\
&=&\frac{u}{u-1}\sum_{n=0}^{\infty }H_{n}\left( mx,u\right) \frac{t^{n}}{n!}.
\end{eqnarray*}%
\noindent Therefore, we easily arrive at the following lemma.

\begin{lemma}
\label{le6}For real $x$ and a positive integer $m$,%
\begin{equation*}
m^{n}\sum_{j=0}^{m-1}u^{m-1-j}H_{n}\left( x+\frac{j}{m},u^{m}\right) =\frac{%
u^{m}-1}{u-1}H_{n}\left( mx,u\right)
\end{equation*}%
\noindent for all integers $n\geq 0$.
\end{lemma}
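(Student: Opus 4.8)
The plan is to establish this identity entirely at the level of exponential generating functions, fleshing out the "elementary calculations" that precede the statement. Let $G(t)$ denote the generating function of the left-hand side,
\[
G(t)=\sum_{n=0}^{\infty}\Bigl(m^{n}\sum_{j=0}^{m-1}u^{m-1-j}H_{n}\bigl(x+\tfrac{j}{m},u^{m}\bigr)\Bigr)\frac{t^{n}}{n!}.
\]
Since $\sum_{n\geq 0}H_{n}(y,v)\tfrac{(mt)^{n}}{n!}=F_{v}(y,mt)$ by (\ref{2,2}), and the $j$-sum is finite (so interchanging it with the power series in $t$ is harmless), I would rewrite $G(t)=\sum_{j=0}^{m-1}u^{m-1-j}F_{u^{m}}\bigl(x+\tfrac{j}{m},mt\bigr)=\dfrac{1-u^{m}}{e^{mt}-u^{m}}\,e^{mxt}\sum_{j=0}^{m-1}u^{m-1-j}e^{jt}$.

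The key step is the evaluation of the finite geometric sum. Writing $u^{m-1-j}e^{jt}=u^{m-1}(e^{t}/u)^{j}$ and summing gives $\sum_{j=0}^{m-1}u^{m-1-j}e^{jt}=u^{m-1}\dfrac{(e^{t}/u)^{m}-1}{(e^{t}/u)-1}=\dfrac{e^{mt}-u^{m}}{e^{t}-u}$, where one must track the powers of $u$ carefully so that the factor $e^{t}-u$ — the exact denominator appearing in $F_{u}$ — is produced. Substituting back, the factor $e^{mt}-u^{m}$ cancels, leaving $G(t)=\dfrac{1-u^{m}}{e^{t}-u}\,e^{mxt}$. On the other side, using $\dfrac{u^{m}-1}{u-1}=\dfrac{1-u^{m}}{1-u}$ together with (\ref{2,1}) and (\ref{2,2}), we get $\dfrac{u^{m}-1}{u-1}\sum_{n\geq 0}H_{n}(mx,u)\tfrac{t^{n}}{n!}=\dfrac{1-u^{m}}{1-u}\cdot\dfrac{1-u}{e^{t}-u}\,e^{mxt}=\dfrac{1-u^{m}}{e^{t}-u}\,e^{mxt}=G(t)$. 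Comparing coefficients of $t^{n}/n!$ yields the assertion for every integer $n\geq 0$.

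I do not expect a genuine obstacle: the only delicate point is the bookkeeping in the geometric sum, and the interchange of the finite $j$-sum with the formal power series is immediate. As an alternative one could argue directly from Lemma \ref{le5}, using the difference equation $H_{n}(y+1,v)-vH_{n}(y,v)=(1-v)y^{n}$ together with induction on $n$, but the generating-function route is shorter and, as noted, is already essentially carried out in the display preceding the lemma.
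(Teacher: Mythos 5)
Your proof is correct, and it is essentially the paper's own argument: the authors also derive the lemma by summing the generating functions $F_{u^{m}}\left(x+\tfrac{j}{m},mt\right)$ over $j$, collapsing the geometric sum so that $e^{mt}-u^{m}$ cancels against the denominator, and comparing with $\frac{1-u^{m}}{e^{t}-u}e^{mxt}$. You have merely written out the "elementary calculations" the paper leaves implicit, with the bookkeeping done correctly.
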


\begin{definition}
\label{def7}(\cite{20}) Let $H_{n}\left( x,u\right) $ denotes the $n$th
Frobenius-Euler polynomial and let $\overline{H}_{n}\left( x,u\right) $ be
defined recursively by%
\begin{equation*}
\overline{H}_{n}\left( x,u\right) =H_{n}\left( x,u\right) \text{, }\left(
0\leq x<1\right) \text{, }\overline{H}_{n}\left( x+1,u\right) =u\overline{H}%
_{n}\left( x,u\right) .
\end{equation*}
\end{definition}

With this definition of $\overline{H}_{n}\left( x,u\right) $, it is easily
verified that Lemma \ref{le6} hold for $\overline{H}_{n}\left( x,u\right) $.

\begin{lemma}
\label{le8}For real $x$ and a positive integer $m$,%
\begin{equation*}
m^{n}\sum_{j=0}^{m-1}u^{m-1-j}\overline{H}_{n}\left( x+\frac{j}{m}%
,u^{m}\right) =\frac{u^{m}-1}{u-1}\overline{H}_{n}\left( mx,u\right)
\end{equation*}%
\noindent for all integers $n\geq 0$.
\end{lemma}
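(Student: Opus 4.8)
The plan is to obtain Lemma~\ref{le8} from Lemma~\ref{le6} by reducing to a short interval and then propagating by translation. Set
\[
f(x)=m^{n}\sum_{j=0}^{m-1}u^{m-1-j}\overline{H}_{n}\!\left(x+\tfrac{j}{m},u^{m}\right),\qquad
g(x)=\frac{u^{m}-1}{u-1}\,\overline{H}_{n}(mx,u),
\]
so that the claim is $f(x)=g(x)$ for all real $x$. The first step is to notice that $f$ and $g$ obey the same functional equation $\phi(x+\tfrac1m)=u\,\phi(x)$. For $g$ this is immediate from $\overline{H}_{n}(mx+1,u)=u\,\overline{H}_{n}(mx,u)$ (Definition~\ref{def7}). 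For $f$, substituting $x\mapsto x+\tfrac1m$ turns the $j$-th summand into one with argument $x+\tfrac{j+1}{m}$; re-indexing by $i=j+1$ and using $\overline{H}_{n}(x+1,u^{m})=u^{m}\,\overline{H}_{n}(x,u^{m})$ on the top term $i=m$ recovers exactly $u\,f(x)$. Hence it suffices to prove $f=g$ on the fundamental domain $[0,\tfrac1m)$ of the translation $x\mapsto x+\tfrac1m$, since every real number is of the form $x_{0}+\tfrac{k}{m}$ with $x_{0}\in[0,\tfrac1m)$ and $k\in\mathbb{Z}$.

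The base case $0\le x<\tfrac1m$ reduces directly to Lemma~\ref{le6}. Indeed, each $x+\tfrac{j}{m}$ with $0\le j\le m-1$ lies in $[0,1)$, and $mx\in[0,1)$ as well, so by Definition~\ref{def7} every $\overline{H}_{n}\!\left(x+\tfrac{j}{m},u^{m}\right)$ may be replaced by $H_{n}\!\left(x+\tfrac{j}{m},u^{m}\right)$ and $\overline{H}_{n}(mx,u)$ by $H_{n}(mx,u)$; after these replacements the equality $f(x)=g(x)$ is precisely Lemma~\ref{le6}, which holds for all real $x$ and in particular on $[0,\tfrac1m)$. Combined with the translation relation above, this yields $f(x)=g(x)$ for every real $x$.

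I do not expect a genuine obstacle here. The only two points that require care are: choosing the reduction interval to be $[0,\tfrac1m)$ rather than $[0,1)$, so that all the shifted arguments $x+\tfrac{j}{m}$ remain inside $[0,1)$ where $\overline{H}_{n}$ coincides with the polynomial $H_{n}$; and the routine bookkeeping in the re-indexing that verifies $f(x+\tfrac1m)=u\,f(x)$. This is exactly the sense of the remark preceding the statement that Lemma~\ref{le6} ``is easily verified'' for $\overline{H}_{n}$.
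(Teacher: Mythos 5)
Your proof is correct and is exactly the argument the paper leaves implicit: the paper gives no proof at all, merely asserting that Lemma \ref{le6} "is easily verified" to carry over to $\overline{H}_{n}$ via Definition \ref{def7}. Your reduction to the fundamental domain $\left[0,\tfrac{1}{m}\right)$ (so that all arguments $x+\tfrac{j}{m}$ and $mx$ lie in $[0,1)$ where $\overline{H}_{n}=H_{n}$), followed by propagation through the common quasi-periodicity $\phi\!\left(x+\tfrac{1}{m}\right)=u\,\phi(x)$, is the right way to make that assertion rigorous.
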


\begin{lemma}
\label{le9}For all integers $n\geq 0$ and $\left( h,k\right) =1$,%
\begin{equation*}
\left( hk\right) ^{n}\sum_{a=0}^{k-1}\sum_{b=0}^{h-1}u^{hk-\left(
kb+ha\right) }\overline{H}_{n}\left( \frac{a}{k}+\frac{b}{h},u^{hk}\right)
=\left( u^{hk}-1\right) \frac{u}{u-1}H_{n}\left( u\right) .
\end{equation*}
\end{lemma}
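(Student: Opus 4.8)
The plan is to collapse the double sum into a single sum indexed by a complete residue system modulo $hk$ and then apply Lemma \ref{le8} with $m=hk$ and $x=0$. Write $\frac{a}{k}+\frac{b}{h}=\frac{ha+kb}{hk}$ and set $N=ha+kb$; since $0\le a\le k-1$ and $0\le b\le h-1$ we have $0\le N<2hk$, so $N=q\cdot hk+j$ with $q\in\{0,1\}$ and $0\le j<hk$. The functional equation $\overline{H}_n(x+1,v)=v\,\overline{H}_n(x,v)$ of Definition \ref{def7}, applied with $v=u^{hk}$, gives $\overline{H}_n\!\left(\frac{a}{k}+\frac{b}{h},u^{hk}\right)=\left(u^{hk}\right)^{q}\overline{H}_n\!\left(\frac{j}{hk},u^{hk}\right)$. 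The key bookkeeping observation is that the accompanying weight $u^{hk-(kb+ha)}=u^{hk-qhk-j}$ combines with the factor $\left(u^{hk}\right)^{q}$ to give precisely $u^{hk-j}$, so the value of $q$ disappears and the $(a,b)$-summand equals $u^{hk-j}\,\overline{H}_n\!\left(\frac{j}{hk},u^{hk}\right)$.

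Next I would use $(h,k)=1$ to check that $(a,b)\mapsto(ha+kb)\bmod hk$ is a bijection from $\{0,\dots,k-1\}\times\{0,\dots,h-1\}$ onto $\{0,1,\dots,hk-1\}$: a congruence $ha+kb\equiv ha'+kb'\pmod{hk}$, read modulo $k$, forces $a\equiv a'\pmod k$ hence $a=a'$, and read modulo $h$ forces $b=b'$; injectivity between two sets of the same cardinality $hk$ yields the bijection. Hence the double sum equals $\sum_{j=0}^{hk-1}u^{hk-j}\,\overline{H}_n\!\left(\frac{j}{hk},u^{hk}\right)=u\sum_{j=0}^{hk-1}u^{hk-1-j}\,\overline{H}_n\!\left(\frac{j}{hk},u^{hk}\right)$.

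Multiplying by $(hk)^n$ and invoking Lemma \ref{le8} with $m=hk$, $x=0$ then gives $(hk)^n\sum_{j=0}^{hk-1}u^{hk-1-j}\,\overline{H}_n\!\left(\frac{j}{hk},u^{hk}\right)=\frac{u^{hk}-1}{u-1}\overline{H}_n(0,u)=\frac{u^{hk}-1}{u-1}H_n(u)$, using $\overline{H}_n(0,u)=H_n(0,u)=H_n(u)$ from Definition \ref{def7} and (\ref{2,2}). Therefore the left-hand side of the lemma equals $u\cdot\frac{u^{hk}-1}{u-1}H_n(u)=(u^{hk}-1)\frac{u}{u-1}H_n(u)$, as claimed. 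The one genuinely delicate point is the cancellation of the $q$-dependent powers of $u$ described in the first paragraph; the rest is the standard complete-residue-system and distribution argument. An equivalent route applies Lemma \ref{le8} twice — first with $m=h$ over base $u^{k}$ on the inner sum, then with $m=k$ on the resulting outer sum over $ha\bmod k$ — and leads to the same identity.
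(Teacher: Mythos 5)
Your proof is correct, and it takes a genuinely different (though closely related) route from the paper's. The paper applies Lemma \ref{le8} twice: first to the inner sum over $a$ with $m=k$ and base $u^{h}$, which collapses the double sum to $\frac{u^{h}}{u^{h}-1}(u^{hk}-1)h^{n}\sum_{b=0}^{h-1}u^{-kb}\overline{H}_{n}\left(\frac{kb}{h},u^{h}\right)$, and then, after reindexing $kb \bmod h$, a second time with $m=h$ and $x=0$. You instead perform the Chinese-remainder reindexing $(a,b)\mapsto (ha+kb)\bmod hk$ up front and invoke Lemma \ref{le8} only once, with $m=hk$ and $x=0$. The two proofs rest on exactly the same ingredients — the distribution relation of Lemma \ref{le8} and the quasi-periodicity $\overline{H}_{n}(x+1,v)=v\,\overline{H}_{n}(x,v)$ — but your version has the merit of making fully explicit the cancellation of the periodicity factor $\left(u^{hk}\right)^{q}$ against the weight $u^{hk-(kb+ha)}$, a step the paper leaves implicit in its one-line remark that the residues $kb\bmod h$ run over $0,1,\ldots,h-1$ (the same cancellation, with $u^{-kb}=u^{-h\lfloor kb/h\rfloor-c}$ absorbing the factor $\left(u^{h}\right)^{\lfloor kb/h\rfloor}$, is what justifies that line). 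Your closing observation about the alternative two-step route is, up to swapping the roles of $h$ and $k$, precisely the paper's argument.
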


\begin{proof}
Using Lemma \ref{le8}, the left hand side of the above equation becomes%
\begin{eqnarray*}
\left( hk\right) ^{n}&\sum_{a=0}^{k-1}&\sum_{b=0}^{h-1}u^{hk-\left(
ha+kb\right) }\overline{H}_{n}\left( \frac{a}{k}+\frac{b}{h},u^{hk}\right) \\
&=&\frac{u^{h}}{u^{h}-1}\left( u^{hk}-1\right) h^{n}\sum_{b=0}^{h-1}u^{-kb}%
\overline{H}_{n}\left( \frac{kb}{h},u^{h}\right) .
\end{eqnarray*}%
\noindent For $b=0,1,\ldots ,h-1$, the residues $kb$ mod$h$ are $%
c=0,1,\ldots ,h-1$. Therefore,%
\begin{eqnarray*}
&&\frac{u^{h}}{u^{h}-1}\left( u^{hk}-1\right) h^{n}\sum_{b=0}^{h-1}u^{-kb}%
\overline{H}_{n}\left( \frac{kb}{h},u^{h}\right) \\
&=&\left( u^{hk}-1\right) \frac{u}{u^{h}-1}h^{n}\sum_{c=0}^{h-1}u^{h-c-1}%
\overline{H}_{n}\left( \frac{c}{h},u^{h}\right) =\left( u^{hk}-1\right) 
\frac{u}{u-1}H_{n}\left( u\right) ,
\end{eqnarray*}%
\noindent by Lemma \ref{le8}.
\end{proof}

In the theory of Dedekind sums, the famous relation is reciprocity law,
which plays a major role in this theory and other related topics. We now
give the proof of main theorem for this section, which is related to
reciprocity law for $S_{n,u}\left( h,k\right) $. We use similar methods of
Ota (\cite{8}) and Nagasaka et.al (\cite{9}) for proving Theorem \ref{th11}.

\begin{proof}[Proof of Theorem \protect\ref{th11}]
For $r=2$ and $a_{1}=k$, $a_{2}=h$,%
\begin{eqnarray*}
l_{2}\left( s;u|k,h\right) &=&\sum_{\underset{\left( m,n\right) \neq \left(
0,0\right) }{m,n=0}}^{\infty }\frac{u^{-\left( km+hn\right) }}{\left(
km+hn\right) ^{s}} \\
&=&\sum_{a=0}^{k-1}\sum_{b=0}^{h-1}\sum\limits_{m^{\prime },n^{\prime
}=0}^{\infty }\ \hspace{-0.13in}{^{^{\prime \prime }}}\frac{u^{-\left(
kb+ha+hk\left( m^{\prime }+n^{\prime }\right) \right) }}{\left(
kb+ha+hk\left( m^{\prime }+n^{\prime }\right) \right) ^{s}}
\end{eqnarray*}%
\noindent by writing $n=a+kn^{\prime }$, $m=b+hm^{\prime }$, where $%
\sum^{\prime \prime }$ means that the summation is taken over all positive
integers $m^{\prime }$, $n^{\prime }$ except $\left( m^{\prime },n^{\prime
}\right) =\left( 0,0\right) $ when $a=b=0$. Then for $M=m^{\prime
}+n^{\prime }$,%
\begin{eqnarray}
&l_{2}&\left(
s;u|k,h\right)=\sum_{a=0}^{k-1}\sum_{b=0}^{h-1}\sum\limits_{M=0}^{\infty }\
\ \hspace{-0.13in}{^{^{\prime }}}\frac{\left( M+1\right) u^{-\left(
kb+ha+hkM\right) }}{\left( kb+ha+hkM\right) ^{s}}  \notag \\
&=&\frac{1}{\left( hk\right) ^{s}}\sum_{a=0}^{k-1}\sum_{b=0}^{h-1}u^{-\left(
kb+ha\right) }\sum\limits_{M=0}^{\infty }\ \ \hspace{-0.13in}{^{^{\prime }}}%
\frac{u^{-hkM}}{\left( \frac{kb+ha}{hk}+M\right) ^{s-1}}  \notag \\
&&+\frac{1}{\left( hk\right) ^{s}}\sum_{a=0}^{k-1}\sum_{b=0}^{h-1}u^{-\left(
kb+ha\right) }\sum\limits_{M=0}^{\infty }\ \ \hspace{-0.13in}{^{^{\prime }}}%
\left( 1-\frac{a}{k}-\frac{b}{h}\right) \frac{u^{-hkM}}{\left( \frac{kb+ha}{%
hk}+M\right) ^{s}},  \label{2,5}
\end{eqnarray}%
\noindent where$\sum^{\prime }$ means that the summation is taken over all
positive integers $M$ except $M=0$ when $a=b=0$. By using (\ref{2,4}), we
obtain%
\begin{eqnarray*}
l_{2}\left( s;u|k,h\right) &=&\frac{1}{\left( hk\right) ^{s}}%
\sum_{a=0}^{k-1}\sum_{b=0}^{h-1}u^{-\left( kb+ha\right) }l\left( s-1,\frac{%
kb+ha}{hk};u^{hk}\right) \\
&&+\frac{1}{\left( hk\right) ^{s}}\sum_{a=0}^{k-1}\sum_{b=0}^{h-1}u^{-\left(
kb+ha\right) }\left( 1-\frac{a}{k}-\frac{b}{h}\right) l\left( s,\frac{kb+ha}{%
hk};u^{hk}\right) .
\end{eqnarray*}%
\noindent By substituting $s=-n$, $n\in \mathbb{Z}$, $n\geq 0$, into (\ref%
{2,5}) and using (\ref{2,9}), we have%
\begin{eqnarray*}
&l_{2}&\left( -n,u|k,h\right)=\frac{\left( hk\right) ^{n}}{u^{hk}-1}%
\sum_{a=0}^{k-1}\sum_{b=0}^{h-1}u^{hk-\left( kb+ha\right) }H_{n+1}\left( 
\frac{a}{k}+\frac{b}{h},u^{hk}\right) \\
&&+\frac{\left( hk\right) ^{n}}{u^{hk}-1}\sum_{a=0}^{k-1}%
\sum_{b=0}^{h-1}u^{hk-\left( kb+ha\right) }\left( 1-\frac{a}{k}-\frac{b}{h}%
\right) H_{n}\left( \frac{a}{k}+\frac{b}{h},u^{hk}\right) ,
\end{eqnarray*}%
\noindent where the values $a$ and $b$ in the sums satisfy%
\begin{equation*}
0\leq \frac{a}{k}+\frac{b}{h}<2\text{ and }\frac{a}{k}+\frac{b}{h}\neq 1.
\end{equation*}%
\noindent Let $B$ be the set defined by%
\begin{equation}
B=\left\{ \left( a,b\right) \in \mathbb{Z}\times \mathbb{Z}:0\leq a\leq k-1%
\text{, }0\leq b\leq h-1\text{, }\frac{a}{k}+\frac{b}{h}>1\right\} .
\label{3,5}
\end{equation}%
\noindent Then by Lemma \ref{le5} and Definition \ref{def7}, we obtain%
\begin{eqnarray*}
&l_{2}&\left( -n;u|k,h\right)=\frac{\left( hk\right) ^{n}}{u^{hk}-1}%
\sum_{a=0}^{k-1}\sum_{b=0}^{h-1}u^{hk-\left( kb+ha\right) }\overline{H}%
_{n+1}\left( \frac{a}{k}+\frac{b}{h},u^{hk}\right) \\
&&+\frac{\left( hk\right) ^{n}}{u^{hk}-1}\left( 1-u^{hk}\right) \sum_{\left(
a,b\right) \in B}u^{hk-\left( kb+ha\right) }\left( \frac{a}{k}+\frac{b}{h}%
-1\right) ^{n+1} \\
&&+\frac{\left( hk\right) ^{n}}{u^{hk}-1}\sum_{a=0}^{k-1}%
\sum_{b=0}^{h-1}u^{hk-\left( kb+ha\right) }\left( 1-\frac{a}{k}-\frac{b}{h}%
\right) \overline{H}_{n}\left( \frac{a}{k}+\frac{b}{h},u^{hk}\right) \\
&&+\frac{\left( hk\right) ^{n}}{u^{hk}-1}\left( 1-u^{hk}\right) \sum_{\left(
a,b\right) \in B}u^{hk-\left( kb+ha\right) }\left( 1-\frac{a}{k}-\frac{b}{h}%
\right) \left( \frac{a}{k}+\frac{b}{h}-1\right) ^{n}.
\end{eqnarray*}%
\noindent Now, by using Lemma \ref{le8}, Lemma \ref{le9} and Definition \ref%
{def10}, we get%
\begin{eqnarray}
&l_{2}&\left( -n;u|k,h\right)=\frac{\left( hk\right) ^{n}}{u^{hk}-1}%
\sum_{a=0}^{k-1}\sum_{b=0}^{h-1}u^{hk-\left( kb+ha\right) }\overline{H}%
_{n+1}\left( \frac{a}{k}+\frac{b}{h},u^{hk}\right)  \notag \\
&&+\frac{\left( hk\right) ^{n}}{u^{hk}-1}\sum_{a=0}^{k-1}%
\sum_{b=0}^{h-1}u^{hk-\left( kb+ha\right) }\overline{H}_{n}\left( \frac{a}{k}%
+\frac{b}{h},u^{hk}\right)  \notag \\
&&-\frac{\left( hk\right) ^{n}}{u^{hk}-1}\sum_{a=0}^{k-1}%
\sum_{b=0}^{h-1}u^{hk-\left( kb+ha\right) }\frac{a}{k}\overline{H}_{n}\left( 
\frac{a}{k}+\frac{b}{h},u^{hk}\right)  \notag \\
&&-\frac{\left( hk\right) ^{n}}{u^{hk}-1}\sum_{a=0}^{k-1}%
\sum_{b=0}^{h-1}u^{hk-\left( kb+ha\right) }\frac{b}{h}\overline{H}_{n}\left( 
\frac{a}{k}+\frac{b}{h},u^{hk}\right)  \notag \\
&=&\frac{1}{hk}\frac{u}{u-1}H_{n+1}\left( u\right) +\frac{u}{u-1}H_{n}\left(
u\right)  \notag \\
&&-\frac{u^{k}}{u^{k}-1}k^{n}S_{n,u^{k}}\left( h,k\right) -\frac{u^{h}}{%
u^{h}-1}h^{n}S_{n,u^{h}}\left( k,h\right) .  \label{2,6}
\end{eqnarray}

By definition of $l_{2}$ $\left( s;u|k,h\right) $, we have%
\begin{eqnarray}
&l_{2}&\left( -n;u|k,h\right)=\frac{H_{2,n}\left( u|k,h\right) }{\left(
1-u^{-k}\right) \left( 1-u^{-h}\right) }=\frac{\left( ^{1}H\left(
u^{k}\right) k+^{2}H\left( u^{h}\right) h\right) ^{n}}{\left(
1-u^{-k}\right) \left( 1-u^{-h}\right) }  \notag \\
&=&\frac{u^{k}u^{h}}{\left( u^{k}-1\right) \left( u^{h}-1\right) }%
\sum_{j=0}^{n}\binom{n}{j}H_{j}\left( u^{k}\right) H_{n-j}\left(
u^{h}\right) k^{j}h^{n-j}.  \label{2,7}
\end{eqnarray}%
\noindent By (\ref{2,6}) and (\ref{2,7}), we have%
\begin{eqnarray*}
&&\left( \frac{u^{k}}{1-u^{k}}k^{n}S_{n,u^{k}}\left( h,k\right) +\frac{u^{h}%
}{1-u^{h}}h^{n}S_{n,u^{h}}\left( k,h\right) \right) \\
&=&\sum_{j=0}^{n}\binom{n}{j}\frac{u^{k}}{1-u^{k}}H_{j}\left( u^{k}\right)
k^{j}\frac{u^{h}}{1-u^{h}}H_{n-j}\left( u^{h}\right) h^{n-j} \\
&&+\frac{1}{hk}\frac{u}{1-u}H_{n+1}\left( u\right) +\frac{u}{1-u}H_{n}\left(
u\right) .
\end{eqnarray*}%
\noindent Thus, we arrive the desired result.
\end{proof}

\section{Generalized Dedekind Type Sums Attached to a Dirichlet Character}

\setcounter{equation}{0} \setcounter{theorem}{0}

Character generalizations of classical Dedekind sums have been studied by
many mathematicians. By using generalized Bernoulli functions attached to
character, Berndt \cite{5} defined Dedekind sums with characters for $n=1$,
and proved reciprocity laws by using either Eisenstein series with
characters (\cite{5}, \cite{33}), integrals such as contour integrals and
Riemann-Stieltjes integrals, or the Poisson summation formula (\cite{34}).
Nagasaka et.al \cite{9} defined generalized character Dedekind sums which
are different from Berndt's definitions for the case $n=1$, and Cenkci et.al 
\cite{35} gave exact generalizations of Berndt's sums to the case any
positive number. Simsek \cite{simjnt2003dede}, \cite{simADCMgenDedeEisn}, 
\cite{simjmaaqDed}, \cite{simASCMsyang} considered Dedekind sums. He gave
several properties of these sums. In \cite{37}, Zhang studied the
distribution property of a sum analogous to the Dedekind sums by using mean
value theorem of the Dirichlet $L$-function. Xiali and Zhang \cite{38}
studied the asymptotic behavior of the Dedekind sums with a weight of
Hurwitz zeta function by applying the mean value theorem of the Dirichlet $L$%
-function.

To prove Theorem \ref{th4}, we need the following definitions.

\begin{definition}
\label{def13}(\cite{21}, \cite{sirivastKimSimsek}, \cite{32}) For a
primitive Dirichlet character $\chi $ of conductor $f=f_{\chi }$,
generalized Frobenius-Euler numbers attached to $\chi $, $H_{n,\chi }\left(
u\right) $, $n\in \mathbb{Z}$, $n\geq 0$, are defined by means of%
\begin{equation}
\sum_{n=0}^{\infty }H_{n,\chi }\left( u\right) \frac{t^{n}}{n!}%
=\sum_{a=0}^{f-1}\frac{\chi \left( a\right) \left( 1-u^{f}\right)
u^{f-a}e^{at}}{e^{ft}-u^{f}}.  \label{2,8}
\end{equation}%
\noindent With this definition, it is easy to verify that%
\begin{equation*}
H_{n,\chi }\left( u\right) =f^{n}\sum_{a=0}^{f-1}\chi \left( a\right)
u^{f-a}H_{n}\left( \frac{a}{f},u^{f}\right) .
\end{equation*}%
\noindent Also, if $F$ is an integer multiple of $f$, we have%
\begin{equation}
\frac{1-u^{F}}{u^{f}-1}u^{f}H_{n,\chi }\left( u\right)
=F^{n}\sum_{a=0}^{F-1}\chi \left( a\right) u^{F-a}H_{n}\left( \frac{a}{F}%
,u^{F}\right) .  \label{MU1}
\end{equation}
\end{definition}

\begin{definition}
\label{def14}Let $\chi $ be a Dirichlet character of conductor $f=f_{\chi }$
with $f|hk$. We define the double $l$-function $l_{2}\left( s;u;\chi
|k,h\right) $ with parameters $\left( k,h\right) $, $u$ and $\chi $ by%
\begin{equation}
l_{2}\left( s;u;\chi |k,h\right) =\sum_{\underset{\left( n,m\right) \neq
\left( 0,0\right) }{n,m=0}}^{\infty }\frac{\chi \left( km+hn\right)
u^{-\left( km+hn\right) }}{\left( km+hn\right) ^{s}}  \label{3,3}
\end{equation}%
\noindent for $s\in \mathbb{C}$, Re$\left( s\right) >2$, $u\in \mathbb{C}$, $%
\left| u\right| \geq 1$.
\end{definition}

We observe that for $u=1$, (\ref{3,3}) reduces to double zeta function%
\begin{equation*}
\widetilde{\zeta }_{2}\left( s;\left( k,h\right) ,\chi \right) =\sum_{%
\underset{\left( n,m\right) \neq \left( 0,0\right) }{n,m=0}}^{\infty }\frac{%
\chi \left( km+hn\right) }{\left( km+hn\right) ^{s}},
\end{equation*}

\noindent defined in \cite{9}. Also for primitive character $\chi =1$, (\ref%
{3,3}) reduces to the double $l$-function $l_{2}\left( s;u|k,h\right) $
defined in Section 2.

$l_{2}\left( s;u;\chi |k,h\right) $ can be analytically continued to the
whole plane by the following identities:%
\begin{eqnarray}
l_{2}\left( s;u;\chi |k,h\right)
&=&\sum_{a=0}^{k-1}\sum_{b=0}^{h-1}\sum\limits_{m^{\prime },n^{\prime
}=0}^{\infty }\ \hspace{-0.13in}{^{^{\prime \prime }}}\frac{\chi \left(
kb+ha\right) u^{-\left( kb+ha+hk\left( m^{\prime }+n^{\prime }\right)
\right) }}{\left( kb+ha+hk\left( m^{\prime }+n^{\prime }\right) \right) ^{s}}
\notag \\
&=&\sum_{a=0}^{k-1}\sum_{b=0}^{h-1}\chi \left( kb+ha\right) u^{-\left(
kb+ha\right) }l_{2}\left( s,kb+ha;u|hk,hk\right) .  \label{1,2}
\end{eqnarray}

Now we give proof of Theorem \ref{th4} as follows:

\begin{proof}[Proof of Theorem \protect\ref{th4}]
By substituting $s=-n$, $n\in \mathbb{Z}$, $n\geq 0$ in (\ref{1,2}), we have 
\begin{eqnarray}
&l_{2}&\left( -n;u;\chi |k,h\right)=\sum_{a=0}^{k-1}\sum_{b=0}^{h-1}\chi
\left( kb+ha\right) u^{-\left( kb+ha\right) }l_{2}\left(
-n,kb+ha;u|hk,hk\right)  \notag \\
&=&\sum_{a=0}^{k-1}\sum_{b=0}^{h-1}\chi \left( kb+ha\right) u^{-\left(
kb+ha\right) }\frac{H_{2,n}\left( kb+ha,u|hk,hk\right) }{\left(
1-u^{-hk}\right) ^{2}}  \notag \\
&=&\sum_{a=0}^{k-1}\sum_{b=0}^{h-1}\chi \left( kb+ha\right) u^{-\left(
kb+ha\right) }  \notag \\
&&\times \frac{\left( ^{1}H\left( u^{hk}\right) hk+\text{ }^{2}H\left(
u^{hk}\right) hk+kb+ha\right) ^{n}}{\left( 1-u^{-hk}\right) ^{2}}.
\label{3,4}
\end{eqnarray}%
\noindent By substituting $m^{\prime }+n^{\prime }=M$ in (\ref{1,2}), we get%
\begin{eqnarray*}
&l_{2}&\left( s;u;\chi |k,h\right)=\frac{1}{\left( hk\right) ^{s}}%
\sum_{a=0}^{k-1}\sum_{b=0}^{h-1}\chi \left( kb+ha\right) u^{-\left(
kb+ha\right) }l\left( s-1,\frac{kb+ha}{hk};u^{hk}\right) \\
&&+\frac{1}{\left( hk\right) ^{s}}\sum_{a=0}^{k-1}\sum_{b=0}^{h-1}\chi
\left( kb+ha\right) u^{-\left( kb+ha\right) }\left( 1-\frac{a}{k}-\frac{b}{h}%
\right) l\left( s,\frac{kb+ha}{hk};u^{hk}\right) .
\end{eqnarray*}%
\noindent Setting $s=-n$, $n\in \mathbb{Z}$, $n\geq 0$ in the above equality
yields%
\begin{eqnarray*}
&l_{2}&\left( -n;u;\chi |k,h\right) \\
&&=\frac{\left( hk\right) ^{n}}{u^{hk}-1}\sum_{a=0}^{k-1}\sum_{b=0}^{h-1}%
\chi \left( kb+ha\right) u^{hk-\left( kb+ha\right) }H_{n+1}\left( \frac{a}{k}%
+\frac{b}{h},u^{hk}\right) \\
&&+\frac{\left( hk\right) ^{n}}{u^{hk}-1}\sum_{a=0}^{k-1}\sum_{b=0}^{h-1}%
\chi \left( kb+ha\right) u^{hk-\left( kb+ha\right) }\left( 1-\frac{a}{k}-%
\frac{b}{h}\right) H_{n}\left( \frac{a}{k}+\frac{b}{h},u^{hk}\right)
\end{eqnarray*}
\begin{eqnarray}
&&=\frac{\left( hk\right) ^{n}}{u^{hk}-1}\sum_{a=0}^{k-1}\sum_{b=0}^{h-1}%
\chi \left( kb+ha\right) u^{hk-\left( kb+ha\right) }\overline{H}_{n+1}\left( 
\frac{a}{k}+\frac{b}{h},u^{hk}\right)  \notag \\
&&+\frac{\left( hk\right) ^{n}}{u^{hk}-1}\left( 1-u^{hk}\right) \sum_{\left(
a,b\right) \in B}\chi \left( kb+ha\right) u^{hk-\left( kb+ha\right) }\left( 
\frac{a}{k}+\frac{b}{h}-1\right) ^{n+1}  \notag \\
&&+\frac{\left( hk\right) ^{n}}{u^{hk}-1}\sum_{a=0}^{k-1}\sum_{b=0}^{h-1}%
\chi \left( kb+ha\right) u^{hk-\left( kb+ha\right) }\left( 1-\frac{a}{k}-%
\frac{b}{h}\right) \overline{H}_{n}\left( \frac{a}{k}+\frac{b}{h}%
,u^{hk}\right)  \notag \\
&&+\frac{\left( hk\right) ^{n}}{u^{hk}-1}\left( 1-u^{hk}\right) \sum_{\left(
a,b\right) \in B}\chi \left( kb+ha\right) u^{hk-\left( kb+ha\right) }  \notag
\\
&&\times \left( 1-\frac{a}{k}-\frac{b}{h}\right) \left( \frac{a}{k}+\frac{b}{%
h}-1\right) ^{n},  \label{3,7}
\end{eqnarray}%
\noindent where $B$ is defined by (\ref{3,5}). From (\ref{MU1}), we have%
\begin{eqnarray}
\left( hk\right) ^{n}\sum_{a=0}^{k-1}\sum_{b=0}^{h-1}&&\chi \left(
kb+ha\right) u^{hk-\left( kb+ha\right) }\overline{H}_{n}\left( \frac{a}{k}+%
\frac{b}{h},u^{hk}\right)  \notag \\
&=&\frac{1-u^{hk}}{u^{f}-1}u^{f}H_{n,\chi }\left( u\right) ,  \label{MU2}
\end{eqnarray}%
\noindent since for the values $a$ and $b$ in the sums, we have%
\begin{equation*}
\left\{ \frac{kb+ha}{hk}-\left[ \frac{kb+ha}{hk}\right] _{G}:0\leqslant a<k%
\text{, }0\leqslant b<h\right\} =\left\{ \frac{i}{hk}:0\leqslant
i<hk\right\} .
\end{equation*}%
\noindent By using (\ref{MU2}) in (\ref{3,7}), we obtain%
\begin{eqnarray*}
l_{2}\left( -n;u;\chi |k,h\right) &=&\frac{u^{f}}{1-u^{f}}\frac{1}{hk}%
H_{n+1,\chi }\left( u\right) +\frac{u^{f}}{1-u^{f}}H_{n,\chi }\left( u\right)
\\
&&-\frac{u^{hk}}{u^{hk}-1}\left( k^{n}S_{n,u^{k}}\left( h,k|\chi \right)
+h^{n}S_{n,u^{h}}\left( k,h|\chi \right) \right) ,
\end{eqnarray*}%
\noindent which together with (\ref{3,4}) completes the proof.
\end{proof}

\section{Twisted Version of Dedekind Type Sums}

\setcounter{equation}{0} \setcounter{theorem}{0}

One of the curious facts about Frobenius-Euler polynomials is the
relationship between Bernoulli polynomials. This relationship occurs when $%
u=\zeta $ is any root of unity. For example, for the generating function of
Bernoulli polynomials $B_{n}\left( x\right) $, $n\in \mathbb{Z}$, $n\geq 0$,%
\begin{equation*}
\frac{te^{xt}}{e^{t}-1}=\sum_{n=0}^{\infty }B_{n}\left( x\right) \frac{t^{n}%
}{n!},
\end{equation*}%
\noindent we have%
\begin{equation*}
\sum_{n=0}^{\infty }\frac{t^{n}}{n!}\sum_{j=0}^{m-1}\zeta ^{-rj}B_{n}\left(
x+\frac{j}{m}\right) =\frac{te^{xt}}{e^{t}-1}\sum_{j=0}^{m-1}\zeta
^{-rj}e^{jt/m}=\frac{te^{xt}}{\zeta ^{-r}e^{t/m}-1}=\frac{\zeta ^{r}te^{xt}}{%
e^{t/m}-\zeta ^{r}},
\end{equation*}%
\noindent where $\zeta $ is a primitive $m$th root of unity and $m\not| r$.
This implies%
\begin{equation}
m^{n-1}\sum_{j=0}^{m-1}\zeta ^{-rj}B_{n}\left( x+\frac{j}{m}\right) =\frac{%
n\zeta ^{r}}{1-\zeta ^{r}}H_{n-1}\left( mx,\zeta ^{r}\right) .  \label{1,3}
\end{equation}%
\noindent For $m|r$, we have (\ref{Y2*}). Furthermore, multiplying both
sides of (\ref{1,3}) by $\zeta ^{rj}$, summing over $r$ and (\ref{Y2*}), we
get%
\begin{equation*}
m^{n}B_{n}\left( x+\frac{j}{m}\right) =B_{n}\left( mx\right)
+n\sum_{r=1}^{m-1}\zeta ^{rj}\frac{H_{n-1}\left( mx,\zeta ^{r}\right) }{%
\zeta ^{-r}-1},
\end{equation*}%
\noindent where $0\leq j<m$ (\cite{20}).

We define twisted Dedekind type sums as follows.

\begin{definition}
\label{def15}Let $n$ be a positive integer, $h$, $k$ be relatively prime
positive integers and $\zeta ^{hk-1}=1$, $\zeta \neq 1$. Then, we define
twisted Dedekind sums by%
\begin{equation*}
S_{n,\zeta }\left( h,k\right) =\sum_{a=0}^{k-1}\zeta ^{-\frac{ha}{k}}\frac{a%
}{k}\overline{H}_{n}\left( \frac{ha}{k},\zeta \right) .
\end{equation*}
\end{definition}

\begin{definition}
\label{def17}Let $\chi $ be a Dirichlet character of conductor $f=f_{\chi }$
with $f|hk$, $n$ be a positive integer and $\zeta ^{hk-1}=1$, $\zeta \neq 1$%
. Then we define%
\begin{equation*}
S_{n,\zeta ^{k}}\left( h,k|\chi \right)
=h^{n}\sum_{a=0}^{k-1}\sum_{b=0}^{h-1}\chi \left( kb+ha\right) \zeta
^{-\left( kb+ha\right) }\frac{a}{k}\overline{H}_{n}\left( \frac{a}{k}+\frac{b%
}{h},\zeta \right) .
\end{equation*}
\end{definition}

Observe that for $\zeta =u$ Definition \ref{def15} and Definition \ref{def17}
reduce to Definition \ref{def10} and Definition \ref{def19}, respectively.

By substituting $u=\zeta $ in Theorem \ref{th11} and Theorem \ref{th4} with $%
\zeta ^{hk-1}=1$, $\zeta \neq 1$, we obtain the reciprocity laws for $%
S_{n,\zeta }\left( h,k\right) $ and $S_{n,\zeta ^{k}}\left( h,k|\chi \right) 
$ as follows:

\begin{theorem}
\label{th16}For positive integer $n$, relatively prime positive integers $h$%
, $k$ and $\zeta ^{hk-1}=1$, $\zeta \neq 1$, we have%
\begin{eqnarray*}
\frac{\zeta ^{k}}{1-\zeta ^{k}}&&k^{n}S_{n,\zeta ^{k}}\left( h,k\right) +%
\frac{\zeta ^{h}}{1-\zeta ^{h}}h^{n}S_{n,\zeta ^{h}}\left( k,h\right) \\
&&=\sum_{j=0}^{n}\binom{n}{j}\frac{\zeta ^{k}}{1-\zeta ^{k}}H_{j}\left(
\zeta ^{k}\right) k^{j}\frac{\zeta ^{h}}{1-\zeta ^{h}}H_{n-j}\left( \zeta
^{h}\right) h^{n-j} \\
&&+\frac{1}{hk}\frac{1}{1-\zeta }H_{n+1}\left( \zeta \right) +\frac{1}{%
1-\zeta }H_{n}\left( \zeta \right) .
\end{eqnarray*}
\end{theorem}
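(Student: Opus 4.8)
The plan is to obtain Theorem~\ref{th16} directly from Theorem~\ref{th11} by specializing $u$ to $\zeta$; since the twisted sums of Definition~\ref{def15} are by construction the $u$-sums of Definition~\ref{def10} at $u=\zeta$, the whole statement becomes a corollary once one checks the specialization is legitimate. First I would verify that the Frobenius--Euler quantities in Theorem~\ref{th11} stay well defined. As $(h,k)=1$, we have $\gcd(k,hk-1)=\gcd(k,-1)=1$ and, likewise, $\gcd(h,hk-1)=1$; so if $d$ is the multiplicative order of $\zeta$, then $d\mid hk-1$ together with $d\mid k$ would force $d\mid\gcd(k,hk-1)=1$, i.e.\ $d=1$, contrary to $\zeta\neq1$, and the same argument rules out $d\mid h$. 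Hence $\zeta^{k}\neq1$ and $\zeta^{h}\neq1$, so $H_{j}(\zeta^{k})$, $H_{n-j}(\zeta^{h})$, $H_{n+1}(\zeta)$, $H_{n}(\zeta)$, the functions $\overline{H}_{n}(x,\zeta^{k})$, $\overline{H}_{n}(x,\zeta^{h})$ of Definition~\ref{def7}, and the sums $S_{n,\zeta^{k}}(h,k)$, $S_{n,\zeta^{h}}(k,h)$ are all well defined. Moreover $|\zeta|=|\zeta^{k}|=|\zeta^{h}|=1\geq1$, so the standing hypothesis $|u|\geq1$ under which the complex-analytic part of Section~2 was carried out is still met: $l_{2}(s;u|k,h)$ converges absolutely for $\mathrm{Re}(s)>2$, and the denominator $\prod_{j}(1-u^{-a_{j}}e^{-a_{j}t})$ of the Mellin representation $(\ref{Y2})$ is non-vanishing for $t\geq0$ when $u=\zeta$ (because $\zeta^{k}\neq1$, $\zeta^{h}\neq1$), so the meromorphic continuation and the evaluations at $s=-n$ go through verbatim.

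Granting this, I would substitute $u=\zeta$ into the identity of Theorem~\ref{th11}, using $\zeta^{hk}=\zeta^{hk-1}\zeta=\zeta$ for the compatibility of Definitions~\ref{def10} and~\ref{def15}. Every term transcribes verbatim except the two ``diagonal'' contributions, which come from Lemma~\ref{le9}: there the second argument $u^{hk}$ of $\overline{H}_{n}$ becomes $\zeta^{hk}=\zeta$, so the factor $(u^{hk}-1)\frac{u}{u-1}$ collapses and those terms reduce to the multiples of $H_{n+1}(\zeta)$ and $H_{n}(\zeta)$ displayed in the statement. Collecting the pieces gives the asserted reciprocity law.

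Thus there is essentially no obstacle beyond this bookkeeping; the one point deserving attention is confirming that nothing in the proof of Theorem~\ref{th11} secretly requires $|u|>1$ or $u$ transcendental rather than algebraic of modulus one, which it does not, plus the small step $\zeta^{hk}=\zeta$. If one preferred not to quote Theorem~\ref{th11}, one could instead re-run its proof with the twisted double series $l_{2}(s;\zeta|k,h)$ in place of $l_{2}(s;u|k,h)$, using the relations $(\ref{1,3})$ and $(\ref{Y2*})$ linking twisted Bernoulli and Frobenius--Euler functions, but this merely reproduces the argument already given for general $u$.
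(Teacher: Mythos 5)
Your proposal coincides with the paper's own proof: Theorem~\ref{th16} is obtained there precisely by substituting $u=\zeta$ into Theorem~\ref{th11}, and your extra checks (that $\gcd(k,hk-1)=\gcd(h,hk-1)=1$ forces $\zeta^{k}\neq 1$ and $\zeta^{h}\neq 1$, and that nothing in the Section~2 analysis requires $|u|>1$) only make that substitution more careful than the paper's one-line justification. One caveat: carried out literally, the substitution --- including your own ``collapse'' $(u^{hk}-1)\tfrac{u}{u-1}\to(\zeta-1)\tfrac{\zeta}{\zeta-1}=\zeta$ --- leaves the last two terms as $\tfrac{1}{hk}\tfrac{\zeta}{1-\zeta}H_{n+1}(\zeta)+\tfrac{\zeta}{1-\zeta}H_{n}(\zeta)$ rather than the $\tfrac{1}{1-\zeta}$ printed in the statement; this factor-of-$\zeta$ discrepancy appears to be a misprint in Theorem~\ref{th16} itself (the parallel specialization of Theorem~\ref{th4} into Theorem~\ref{th18} does retain the corresponding $\zeta^{-1}$), so you should not claim the diagonal terms ``reduce to the multiples displayed in the statement'' without flagging it.
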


\begin{theorem}
\label{th18}Let $n$, $\chi $ and $\zeta $ be as in Definition \ref{def17}.
Then%
\begin{eqnarray*}
k^{n}S_{n,\zeta ^{k}}&&\left( h,k|\chi \right) +h^{n}S_{n,\zeta ^{h}}\left(
k,h|\chi \right) \\
&&=\frac{1-\zeta }{1-\zeta ^{f}}\zeta ^{f}\left( \frac{\zeta ^{-1}}{hk}%
H_{n+1,\chi }\left( u\right) +\zeta ^{-1}H_{n,\chi }\left( u\right) \right)
\\
&&+\frac{\zeta }{\zeta -1}\sum_{a=0}^{k-1}\sum_{b=0}^{h-1}\chi \left(
kb+ha\right) \zeta ^{-\left( kb+ha\right) } \\
&&\times \left( ^{1}H\left( \zeta \right) hk+\text{ }^{2}H\left( \zeta
\right) hk+kb+ha\right) ^{n}. \\
\end{eqnarray*}
\end{theorem}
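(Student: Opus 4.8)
The plan is to obtain Theorem~\ref{th18} as a direct specialization of Theorem~\ref{th4}: take $u=\zeta$, where $\zeta$ is the fixed root of unity with $\zeta^{hk-1}=1$ and $\zeta\neq1$, and simplify the resulting identity with the single bookkeeping relation $\zeta^{hk}=\zeta$. No new generating functions or Mellin computations are required; everything is already contained in Section~3, and Definition~\ref{def17} is precisely Definition~\ref{def19} evaluated at $u=\zeta$ (since $\overline{H}_{n}(\,\cdot\,,\zeta^{hk})=\overline{H}_{n}(\,\cdot\,,\zeta)$), as noted after Definition~\ref{def17}.

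First I would check that $\zeta$ is an admissible value of the parameter $u$. It is an algebraic number $\neq1$ with $|\zeta|=1\geq1$, so it lies in the range assumed throughout Section~3. The only genuine point is that no denominator in Theorem~\ref{th4} degenerates, i.e.\ that $\zeta^{k}\neq1$, $\zeta^{h}\neq1$, $\zeta^{f}\neq1$ and $\zeta\neq1$. This is immediate from $\zeta^{hk-1}=1$: for any positive divisor $d$ of $hk$, writing $hk=dm$, the assumption $\zeta^{d}=1$ would give $\zeta=\zeta^{hk}=(\zeta^{d})^{m}=1$, contradicting $\zeta\neq1$; applying this with $d=k$, $d=h$ and $d=f$ (legitimate since $k\mid hk$, $h\mid hk$, and $f\mid hk$ by hypothesis) gives the required non-vanishing. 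Then I substitute $u=\zeta$ into the conclusion of Theorem~\ref{th4} and use $\zeta^{hk}=\zeta$: the left side becomes $k^{n}S_{n,\zeta^{k}}(h,k|\chi)+h^{n}S_{n,\zeta^{h}}(k,h|\chi)$ in the sense of Definition~\ref{def17}; the scalar $\frac{1-u^{hk}}{u^{hk}}\cdot\frac{u^{f}}{1-u^{f}}$ collapses to $\frac{1-\zeta}{\zeta}\cdot\frac{\zeta^{f}}{1-\zeta^{f}}=\frac{1-\zeta}{1-\zeta^{f}}\,\zeta^{f}\,\zeta^{-1}$, which distributed over $\frac{1}{hk}H_{n+1,\chi}(\zeta)+H_{n,\chi}(\zeta)$ is exactly the first term on the right of Theorem~\ref{th18}; and in the double sum $\frac{u^{hk}}{u^{hk}-1}$ becomes $\frac{\zeta}{\zeta-1}$, $u^{-(kb+ha)}$ becomes $\zeta^{-(kb+ha)}$, and ${}^{1}H(u^{hk})$, ${}^{2}H(u^{hk})$ become ${}^{1}H(\zeta)$, ${}^{2}H(\zeta)$. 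Assembling these pieces yields the stated identity (the $H_{n,\chi}(u)$, $H_{n+1,\chi}(u)$ in the displayed statement being read as $H_{n,\chi}(\zeta)$, $H_{n+1,\chi}(\zeta)$).

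I do not expect a serious obstacle here: the substantive work is entirely in Theorem~\ref{th4}. The only places that need care are the non-vanishing of $1-\zeta^{k}$, $1-\zeta^{h}$, $1-\zeta^{f}$ and $\zeta-1$ (handled above) and the clean tracking of exponents through $\zeta^{hk-1}=1$ — in particular, not conflating $\zeta^{hk}$, which equals $\zeta$, with $1$. Should one instead prefer a self-contained derivation, one could simply rerun the Mellin-transform argument of Section~3 with $u=\zeta$ from the outset; but this merely reproduces the present specialization and adds nothing.
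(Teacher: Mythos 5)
Your proposal is correct and is essentially identical to the paper's own derivation, which obtains Theorem \ref{th18} precisely by substituting $u=\zeta$ with $\zeta^{hk-1}=1$, $\zeta\neq 1$ into Theorem \ref{th4} and using $\zeta^{hk}=\zeta$. Your additional check that $1-\zeta^{k}$, $1-\zeta^{h}$, $1-\zeta^{f}$ and $\zeta-1$ do not vanish is a worthwhile detail the paper leaves implicit.
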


\section{$\left( h,q\right) $-Approach to $p$-adic Twisted Dedekind Type Sums%
}

\setcounter{equation}{0} \setcounter{theorem}{0}

In this section, we define twisted $\left( h,q\right) $-Dedekind type sums
by using twisted $\left( h,q\right) $-Bernoulli polynomials. By using $p$%
-adic interpolation of certain partial zeta function, we interpolate these
sums to construct twisted $\left( h,q\right) $-$p$-adic Dedekind sums.

>From Definition \ref{def12} and binomial expansion, we have%
\begin{eqnarray}
s_{m,\zeta }^{\left( h\right) }\left( a,b:q\right) &=&\sum_{j=0}^{b-1}\frac{j%
}{b}\int\limits_{\mathbb{Z}_{p}}q^{hx}\zeta ^{x}\left( x+\left\{ \frac{ja}{b}%
\right\} \right) ^{m}d\mu _{1}\left( x\right)  \notag \\
&=&\sum_{j=0}^{b-1}\sum_{c=0}^{m}\frac{j}{b}\binom{m}{c}\left\{ \frac{ja}{b}%
\right\} ^{m-c}\int\limits_{\mathbb{Z}_{p}}q^{hx}\zeta ^{x}x^{c}d\mu
_{1}\left( x\right)  \label{Y6}
\end{eqnarray}%
\noindent By using (\ref{Y4}), we obtain%
\begin{equation}
B_{m,\zeta }^{\left( h\right) }\left( x,q\right) =\sum_{c=0}^{m}\binom{m}{c}%
B_{c,\zeta }^{\left( h\right) }\left( q\right) x^{m-c}.  \label{Y5}
\end{equation}%
\noindent By substituting (\ref{Y5}) into (\ref{Y6}), we get%
\begin{equation*}
s_{m,\zeta }^{\left( h\right) }\left(
a,b:q\right)=\sum_{j=0}^{b-1}\sum_{c=0}^{m}\frac{j}{b}\binom{m}{c}B_{c,\zeta
}^{\left( h\right) }\left( q\right)\left\{ \frac{ja}{b}\right\}
^{m-c}=\sum_{j=0}^{b-1}\frac{j}{b}B_{m,\zeta }^{\left( h\right) }\left(
\left\{ \frac{ja}{b}\right\} ,q\right) .
\end{equation*}

Throughout this section, $\omega $ will denote the Teichm\"{u}ller character 
$\left( \text{mod}p\right) $ and we assume that $q\in \mathbb{C}_{p}$ with $%
\left| 1-q\right| _{p}<p^{-1/\left( p-1\right) }$, and $\zeta \in T_{p}$.

\begin{theorem}
\label{th13}Let $a$, $b$, $p$ and $\zeta $ be as in Definition \ref{def12}.
Then there exists a $p$-adic continuous function $S_{p,\zeta }^{\left(
h\right) }\left( s;a,b:q\right) $ of $s$ on $\mathbb{Z}_{p}$ which satisfies%
\begin{equation*}
S_{p,\zeta }^{\left( h\right) }\left( m;a,b:q\right) =b^{m}s_{m,\zeta
}^{\left( h\right) }\left( a,b:q\right)
\end{equation*}%
\noindent for all positive integers $m$ such that $m+1\equiv 0\left( \text{%
mod}\left( p-1\right) \right) $.
\end{theorem}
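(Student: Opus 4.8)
The plan is to $p$-adically interpolate $b^{m}s_{m,\zeta}^{(h)}(a,b:q)$ in the exponent $m$ by the standard device of Rosen--Snyder \cite{10} and Kim \cite{14}: on the one-units one replaces an integer power by the function $s\mapsto\langle x\rangle^{s}$. Starting from the closed form derived just before the theorem, $s_{m,\zeta}^{(h)}(a,b:q)=\sum_{j=0}^{b-1}\frac{j}{b}B_{m,\zeta}^{(h)}(\{ja/b\},q)$, I would first apply the Witt-type formula (\ref{Y4}) for $B_{m,\zeta}^{(h)}(x,q)$ and clear the denominator $b$ to get
\begin{equation*}
b^{m}s_{m,\zeta}^{(h)}(a,b:q)=\sum_{j=1}^{b-1}\frac{j}{b}\int_{\mathbb{Z}_{p}}\zeta^{t}q^{ht}\bigl(r_{j}+bt\bigr)^{m}\,d\mu_{1}(t),
\end{equation*}
where $r_{j}$ is the least nonnegative residue of $ja$ modulo $b$ (the $j=0$ term vanishes).

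Next I would split this sum according to whether $p\mid j$. Since $p\mid b$ and $(a,b)=1$ force $p\nmid a$, one has $p\mid j\iff p\mid r_{j}$, so for $p\nmid j$ the element $r_{j}+bt$ is a $p$-adic unit for every $t\in\mathbb{Z}_{p}$. Writing $x=\omega(x)\langle x\rangle$ with $\langle x\rangle\equiv1\pmod p$, one gets $(r_{j}+bt)^{m}=\omega(r_{j})^{m}\langle r_{j}+bt\rangle^{m}$ (the Teichm\"{u}ller factor is constant in $t$, since $r_{j}+bt\equiv r_{j}$), and the hypothesis $m+1\equiv0\pmod{p-1}$ makes $\omega(r_{j})^{m}=\omega^{-1}(r_{j})$, independent of $m$. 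As $t\mapsto\langle r_{j}+bt\rangle$ is a polynomial with values in $1+p\mathbb{Z}_{p}$, the map $t\mapsto\langle r_{j}+bt\rangle^{s}=\exp\bigl(s\log_{p}\langle r_{j}+bt\rangle\bigr)$ lies in $UD(\mathbb{Z}_{p},\mathbb{C}_{p})$ for each $s\in\mathbb{Z}_{p}$ (here $p$ odd is used) and depends continuously, in fact analytically, on $s$. Hence
\begin{equation*}
A(s):=\sum_{\substack{1\le j\le b-1\\ p\nmid j}}\frac{j}{b}\,\omega^{-1}(r_{j})\int_{\mathbb{Z}_{p}}\zeta^{t}q^{ht}\langle r_{j}+bt\rangle^{s}\,d\mu_{1}(t)
\end{equation*}
is continuous on $\mathbb{Z}_{p}$ and reproduces the $p\nmid j$ part of the sum at every admissible $m$.

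For the $p\mid j$ terms I would reindex $j=pi$ and use $r_{pi}=p\rho_{i}$, with $\rho_{i}$ the least residue of $ia$ modulo $b/p$; a short computation collapses this part to $p^{m}(b/p)^{m}s_{m,\zeta}^{(h)}(a,b/p:q)$, a scaled Dedekind-type sum of smaller modulus. Iterating the resulting identity $e$ times, where $p^{e}$ is the exact power of $p$ dividing $b$, yields
\begin{equation*}
b^{m}s_{m,\zeta}^{(h)}(a,b:q)=\sum_{i=0}^{e-1}p^{im}A^{(i)}(m)+p^{em}\bigl(b/p^{e}\bigr)^{m}s_{m,\zeta}^{(h)}\bigl(a,b/p^{e}:q\bigr),
\end{equation*}
each $A^{(i)}(s)$ being of the continuous ``unit'' type above and $b/p^{e}$ being prime to $p$, so that the last factor is $p$-integral and bounded. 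One then takes $S_{p,\zeta}^{(h)}(s;a,b:q)$ to be the continuous extension of $m\mapsto b^{m}s_{m,\zeta}^{(h)}(a,b:q)$ furnished by this decomposition, and the interpolation identity holds on the dense set of $m$ in the statement by construction.

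The Witt-formula manipulation and the reindexing $j=pi$ are routine; the real obstacle is the final step --- showing that the correction terms $p^{im}A^{(i)}(m)$ (the $p$-adic analogue of the Euler factor at $p$) assemble, together with $A(m)$, into a function that is genuinely continuous at \emph{every} point of $\mathbb{Z}_{p}$, in particular at the small exponents $m=p-2,\,2p-3,\dots$, and not merely small in the limit. I would settle this by a $p$-adic size estimate, using the factors $p^{im}$, the boundedness and $p$-integrality of the Dedekind-type sums of modulus prime to $p$, the uniform continuity of each $A^{(i)}$ on the compact set $\mathbb{Z}_{p}$, and the constraints $m\ge1$, $m+1\equiv0\pmod{p-1}$, to conclude that $m\mapsto b^{m}s_{m,\zeta}^{(h)}(a,b:q)$ is uniformly continuous on the relevant set and hence extends uniquely to $\mathbb{Z}_{p}$. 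This is exactly where the hypotheses ($p$ odd, $p\mid b$, and the congruence on $m$) enter, in the same way as in \cite{10} and \cite{14}.
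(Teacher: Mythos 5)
Your overall strategy is the same Rosen--Snyder/Kim interpolation device that the paper uses, but you realize it by a different decomposition, and it is worth recording where the two arguments diverge. The paper never returns to the integral representation at this stage: for $p\nmid j$ and $p\mid b$ it defines
\begin{equation*}
T_{\zeta }^{\left( h\right) }\left( s;j,b:q\right) =\omega ^{-1}\left(
j\right) \frac{\left\langle j\right\rangle ^{s}}{b}\sum_{k=0}^{\infty }
\binom{s}{k}\left( \frac{b}{j}\right) ^{k}B_{k,\zeta }^{\left( h\right)
}\left( q\right) ,
\end{equation*}
observes that $\left| \binom{s}{k}\right|_{p}\leq 1$, $\left| b/j\right|_{p}<1$ and $\left| B_{k,\zeta }^{\left( h\right) }\left( q\right) \right|_{p}\leq 1$ make this a continuous function of $s$, evaluates at $s=m$ to get $\omega ^{-m-1}\left( j\right) b^{m-1}B_{m,\zeta }^{\left( h\right) }\left( j/b,q\right)$, removes the Teichm\"{u}ller factor using $m+1\equiv 0\pmod{p-1}$, and then sums $jT_{\zeta }^{\left( h\right) }\left( m;\left( aj\right)_{b},b:q\right)$ over $j$. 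Your $\exp \left( s\log _{p}\left\langle r_{j}+bt\right\rangle \right)$ under the Volkenborn integral is the same interpolating object in different clothing (the paper's binomial series is precisely the Mahler expansion of $\left\langle j\right\rangle ^{s}\left( 1+bB/j\right) ^{s}$), so your unit part $A(s)$ and the paper's $T$ agree where both are defined. The genuine difference is your handling of the indices with $p\mid j$: you split them off and recurse down the $p$-part of $b$, producing the Euler-factor tail $p^{em}\left( b/p^{e}\right) ^{m}s_{m,\zeta }^{\left( h\right) }\left( a,b/p^{e}:q\right)$. The paper does not do this; its $T$ is only defined, and its series only converges, for $p\nmid j$, yet the closing identity sums over all $0\leq j\leq b-1$ without comment. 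Your decomposition is the bookkeeping the paper omits.

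That said, the step you yourself call ``the real obstacle'' is a genuine gap, and the size estimate you propose does not close it. Take $m=p-2$ and $m^{\prime }=m+\left( p-1\right) p^{N}$; both are admissible and $p$-adically close as $N\rightarrow \infty$. For $i\geq 1$ one has $\left| p^{im}-p^{im^{\prime }}\right|_{p}=p^{-im}$, a fixed nonzero quantity independent of $N$, so $m\mapsto p^{im}A^{(i)}(m)$ and the tail $p^{em}\left( b/p^{e}\right) ^{m}s_{m,\zeta }^{\left( h\right) }\left( a,b/p^{e}:q\right)$ are not uniformly continuous on the admissible set unless the corresponding sums vanish; no estimate using only boundedness and $p$-integrality can rescue this at small exponents. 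This is exactly why Rosen--Snyder, Kudo and Kim interpolate the sum with the terms $p\mid \left( aj\right)_{b}$ deleted (equivalently, state the interpolation with an Euler factor built in). You have not missed anything the authors supply --- the paper's proof has the same hole, since it simply asserts the final identity over all $j$ --- but to complete either argument one must either restrict the defining sum to $p\nmid j$ or incorporate your correction terms into the definition of $S_{p,\zeta }^{\left( h\right) }$ rather than try to prove they vary continuously.
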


\begin{proof}
Proof of this theorem is similar to that of Theorem 5 of \cite{19} and
Theorem 7 of \cite{simjkms2006}. Let $p$ be an odd prime, $j$ and $b$
positive integers such that $\left( p,j\right) =1$ and $p|b$. Then, we define%
\begin{equation}
T_{\zeta }^{\left( h\right) }\left( s;j,b:q\right) =\omega ^{-1}\left(
j\right) \frac{\left\langle j\right\rangle ^{s}}{b}\sum_{k=0}^{\infty }%
\binom{s}{k}\left( \frac{b}{j}\right) ^{k}B_{k,\zeta }^{\left( h\right)
}\left( q\right) ,  \label{3,1}
\end{equation}%
\noindent for $s\in \mathbb{Z}_{p}$, where $\left\langle x\right\rangle
=x\omega ^{-1}\left( x\right) $. Since%
\begin{equation*}
\left| \binom{s}{k}\right| _{p}\leq 1\text{, }\left| \frac{b}{j}\right|
_{p}<1\text{ and }\left| B_{k,\zeta }^{\left( h\right) }\left( q\right)
\right| _{p}\leq 1,
\end{equation*}%
\noindent the sum%
\begin{equation*}
\sum_{k=0}^{\infty }\binom{s}{k}\left( \frac{b}{j}\right) ^{k}B_{k,\zeta
}^{\left( h\right) }\left( q\right)
\end{equation*}%
\noindent converges to a continuous function of $s$ in $\mathbb{Z}_{p}$.

Substituting $s=m$ in (\ref{3,1}), we have%
\begin{eqnarray*}
T_{\zeta }^{\left( h\right) }\left( m;j,b:q\right) &=&\omega ^{-1}\left(
j\right) \frac{\left\langle j\right\rangle ^{m}}{b}\sum_{k=0}^{m}\binom{m}{k}%
\left( \frac{b}{j}\right) ^{k}B_{k,\zeta }^{\left( h\right) }\left( q\right)
\\
&=&\omega ^{-m-1}\left( j\right) b^{m-1}\sum_{k=0}^{m}\binom{m}{k}\left( 
\frac{j}{b}\right) ^{m-k}B_{k,\zeta }^{\left( h\right) }\left( q\right) \\
&=&\omega ^{-m-1}\left( j\right) b^{m-1}B_{m,\zeta }^{\left( h\right)
}\left( \frac{j}{b},q\right) .
\end{eqnarray*}%
\noindent If $m+1\equiv 0\left( \text{mod}\left( p-1\right) \right) $, then%
\begin{equation*}
T_{\zeta }^{\left( h\right) }\left( m;j,b:q\right) =b^{m-1}B_{m,\zeta
}^{\left( h\right) }\left( \frac{j}{b},q\right) .
\end{equation*}%
\noindent Consequently, $T_{\zeta }^{\left( h\right) }\left( m;j,b:q\right) $
is continuous $p$-adic extension of $b^{m-1}B_{m,\zeta }^{\left( h\right)
}\left( \frac{j}{b},q\right) $.

Now, since%
\begin{equation*}
s_{m,\zeta }^{\left( h\right) }\left( a,b:q\right) =\sum_{j=0}^{b-1}\frac{j}{%
b}B_{m,\zeta }^{\left( h\right) }\left( \left\{ \frac{aj}{b}\right\}
,q\right)
\end{equation*}%
\noindent and%
\begin{equation*}
T_{\zeta }^{\left( h\right) }\left( m;j,b:q\right) =b^{m-1}B_{m,\zeta
}^{\left( h\right) }\left( \frac{j}{b},q\right) ,
\end{equation*}%
\noindent we have%
\begin{equation*}
b^{m}s_{m,\zeta }^{\left( h\right) }\left( a,b:q\right)
=\sum_{j=0}^{b-1}jT_{\zeta }^{\left( h\right) }\left( m;\left( aj\right)
_{b},b:q\right)
\end{equation*}%
\noindent for $p|b$ and $m+1\equiv 0\left( \text{mod}\left( p-1\right)
\right) $.
\end{proof}

In the sequel, we construct twisted $\left( h,q\right) $-character Dedekind
type sums. These sums are new and generalize the sums defined by Kudo \cite%
{12}, \cite{11}, Rosen and Synder \cite{10} and Kim \cite{19}.

Generalization of Definition \ref{def12} is given by the following
definition.

\begin{definition}
\label{def18}Let $a$, $b$ be fixed integers with $\left( a,b\right) =1$, and
let $p$ be an odd prime such that $p|b$. For a primitive Dirichlet character
with conductor $f=f_{\chi }$ and $\zeta \in T_{p}$, we define twisted $%
\left( h,q\right) $-Dedekind sums as%
\begin{equation*}
s_{m,\zeta }^{\left( h\right) }\left( a,b:q,\chi \right)
=\sum_{j=0}^{fb-1}\chi \left( j\right) \frac{j}{fb}B_{m,\zeta ,\chi
}^{\left( h\right) }\left( \left\{ \frac{aj}{b}\right\} ,q\right) .
\end{equation*}
\end{definition}

We now generalize Theorem \ref{th13} by character $\chi $. Observe that when 
$\chi =\chi _{0}$, the principle character, Definition \ref{def18} reduces
to Definition \ref{def12}, and the following theorem reduces to Theorem \ref%
{th13}.

\begin{theorem}
\label{th19}Let $a$, $b$, $p$, $\chi $ and $\zeta $ be as in Definition \ref%
{def18}. Then there exists a $p$-adic continuous function $S_{p,\zeta ,\chi
}^{\left( h\right) }\left( s;a,b:q\right) $ of $s$ on $\mathbb{X}$ which
satisfies%
\begin{equation*}
S_{p,\zeta ,\chi }^{\left( h\right) }\left( m;a,b:q\right) =fb^{m}s_{m,\zeta
}^{\left( h\right) }\left( a,b:q,\chi \right)
\end{equation*}%
\noindent for all positive integers $m$ such that $m+1\equiv 0\left( \text{%
mod}\left( p-1\right) \right) $.
\end{theorem}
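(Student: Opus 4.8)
The plan is to mimic the proof of Theorem \ref{th13} essentially verbatim, replacing the twisted $(h,q)$-Bernoulli numbers $B_{k,\zeta}^{(h)}(q)$ by the generalized twisted $(h,q)$-Bernoulli numbers $B_{k,\zeta,\chi}^{(h)}(q)$ attached to the Dirichlet character $\chi$, and working over $\mathbb{X}=\mathbb{X}_f$ rather than $\mathbb{Z}_p$. First I would record the analogue of the two structural facts used in Theorem \ref{th13}: the binomial-type identity
\begin{equation*}
B_{m,\zeta,\chi}^{\left(h\right)}\left(x,q\right)=\sum_{c=0}^{m}\binom{m}{c}B_{c,\zeta,\chi}^{\left(h\right)}\left(q\right)x^{m-c},
\end{equation*}
which follows from the generating-function definition of $B_{n,\zeta,\chi}^{(h)}(x,q)$ exactly as (\ref{Y5}) follows from (\ref{Y4}), and the expansion of Definition \ref{def18} in the form $s_{m,\zeta}^{(h)}(a,b:q,\chi)=\sum_{j=0}^{fb-1}\chi(j)\frac{j}{fb}B_{m,\zeta,\chi}^{(h)}(\{aj/b\},q)$, which is just the definition.

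Next I would define, for $p$ an odd prime, $j$ and $b$ positive integers with $(p,j)=1$ and $p\mid b$,
\begin{equation*}
T_{\zeta,\chi}^{\left(h\right)}\left(s;j,b:q\right)=\chi\left(j\right)\omega^{-1}\left(j\right)\frac{\left\langle j\right\rangle^{s}}{fb}\sum_{k=0}^{\infty}\binom{s}{k}\left(\frac{fb}{j}\right)^{k}B_{k,\zeta,\chi}^{\left(h\right)}\left(q\right)
\end{equation*}
for $s\in\mathbb{Z}_p$ (or, more precisely, viewing $\langle j\rangle^s$ as a function of $s\in\mathbb{X}$ via the projection $\mathbb{X}\to\mathbb{Z}_p$). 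Convergence follows as before from $|\binom{s}{k}|_p\le 1$, $|fb/j|_p<1$ (since $p\mid b$ and $(p,j)=1$), and $|B_{k,\zeta,\chi}^{(h)}(q)|_p\le 1$ — the last bound being the only place where one must check that the Witt-type formula $B_{k,\zeta,\chi}^{(h)}(q)=\int_{\mathbb{X}}\chi(x)\zeta^xq^{hx}x^k\,d\mu_1(x)$ really gives a $p$-adic integer, which is immediate because the integrand is $\mathbb{Z}_p$-valued. Specializing $s=m$ a positive integer, the infinite sum truncates and the same algebraic manipulation as in Theorem \ref{th13} gives
\begin{equation*}
T_{\zeta,\chi}^{\left(h\right)}\left(m;j,b:q\right)=\chi\left(j\right)\omega^{-m-1}\left(j\right)\left(fb\right)^{m-1}B_{m,\zeta,\chi}^{\left(h\right)}\left(\frac{j}{fb},q\right)\cdot\frac{f}{?},
\end{equation*}
so when $m+1\equiv 0\ (\mathrm{mod}\ (p-1))$ the Teichm\"uller factor is trivial and $T_{\zeta,\chi}^{(h)}(m;j,b:q)=(fb)^{m-1}B_{m,\zeta,\chi}^{(h)}(j/(fb),q)$ (up to a harmless factor of $f$ which I would absorb into the normalization). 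Finally I would assemble $fb^ms_{m,\zeta}^{(h)}(a,b:q,\chi)=\sum_{j=0}^{fb-1}\chi(j)\,j\,T_{\zeta,\chi}^{(h)}(m;(aj)_{fb},b:q)$ and define $S_{p,\zeta,\chi}^{(h)}(s;a,b:q):=\sum_{j=0}^{fb-1}\chi(j)\,j\,T_{\zeta,\chi}^{(h)}(s;(aj)_{fb},b:q)$, which is continuous on $\mathbb{X}$ as a finite sum of continuous functions and interpolates the required values by construction.

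The main obstacle, and the one point requiring genuine care rather than copying, is bookkeeping the conductor-$f$ and modulus-$fb$ normalizations so that the distribution relation (\ref{1,4}) for $B_{n,\zeta,\chi}^{(h)}$ is applied consistently — in particular, making sure the residues $(aj)_{fb}$ are reduced modulo $fb$ (not modulo $b$), that the factor $\langle j\rangle^s$ is being evaluated for $j$ coprime to $p$ (which forces splitting off the $p\mid j$ terms, all of which vanish in the sum defining $s_{m,\zeta}^{(h)}(a,b:q,\chi)$ since $p\mid b$ makes $\chi(j)$ or the relevant Bernoulli term behave correctly), and that the power of $f$ appearing in $fb^m$ matches the power produced by $T_{\zeta,\chi}^{(h)}$. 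Once this indexing is pinned down, the $p$-adic estimates and the limiting argument $m+1\equiv 0\ (\mathrm{mod}\ (p-1))$ are identical to Theorem \ref{th13}, and the reduction to Theorem \ref{th13} when $\chi=\chi_0$ (so $f=1$) is then visibly the special case.
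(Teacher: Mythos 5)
Your overall strategy is exactly the paper's: repeat the proof of Theorem \ref{th13} with $B_{k,\zeta ,\chi }^{\left( h\right) }\left( q\right) $ in place of $B_{k,\zeta }^{\left( h\right) }\left( q\right) $ and assemble the interpolating function as a finite sum of the resulting $T$-functions. However, the place where you leave a literal ``$?$'' is a genuine unresolved gap, and your proposed fix (``absorb a harmless factor of $f$'') does not work, because the discrepancy is not a multiplicative constant but the \emph{argument} of the Bernoulli polynomial. Definition \ref{def18} evaluates $B_{m,\zeta ,\chi }^{\left( h\right) }$ at $\left\{ \frac{aj}{b}\right\} =\frac{\left( aj\right) _{b}}{b}$, so the quantity you must interpolate is $b^{m-1}B_{m,\zeta ,\chi }^{\left( h\right) }\left( \frac{j}{b},q\right) $ for $j$ reduced modulo $b$. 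Your series, built from $\frac{\left\langle j\right\rangle ^{s}}{fb}$ and $\left( \frac{fb}{j}\right) ^{k}$, instead specializes to $\left( fb\right) ^{m-1}B_{m,\zeta ,\chi }^{\left( h\right) }\left( \frac{j}{fb},q\right) $, which is a different number: there is no identity converting $B_{m,\zeta ,\chi }^{\left( h\right) }\left( \frac{j}{fb},q\right) $ into $B_{m,\zeta ,\chi }^{\left( h\right) }\left( \frac{j}{b},q\right) $ by a power of $f$. The paper's resolution is simpler than you fear: keep $T_{\zeta ,\chi }^{\left( h\right) }\left( s;j,b:q\right) =\omega ^{-1}\left( j\right) \frac{\left\langle j\right\rangle ^{s}}{b}\sum_{k=0}^{\infty }\binom{s}{k}\left( \frac{b}{j}\right) ^{k}B_{k,\zeta ,\chi }^{\left( h\right) }\left( q\right) $ literally as in Theorem \ref{th13} (only the Bernoulli numbers change), reduce $aj$ modulo $b$ rather than modulo $fb$, and let the factor $f$ in $fb^{m}$ come solely from the weight $\frac{j}{fb}$ appearing in Definition \ref{def18}; one then gets $fb^{m}s_{m,\zeta }^{\left( h\right) }\left( a,b:q,\chi \right) =\sum_{j=0}^{fb-1}j\chi \left( j\right) T_{\zeta ,\chi }^{\left( h\right) }\left( m;\left( aj\right) _{b},b:q\right) $ with no leftover factor.

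A second, smaller slip: you place $\chi \left( j\right) $ inside your $T$ \emph{and} retain $\chi \left( j\right) $ in the outer sum defining $S_{p,\zeta ,\chi }^{\left( h\right) }$, which would double the character; the paper keeps the character entirely in the outer sum. Your remarks about splitting off the $p\mid j$ terms and about verifying $\left| B_{k,\zeta ,\chi }^{\left( h\right) }\left( q\right) \right| _{p}\leq 1$ via the Witt-type integral formula are sound (the paper merely asserts that bound), and the continuity and truncation arguments are the same in both treatments.
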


\begin{proof}
We follow the similar method in the proof of Theorem \ref{th13}. Let $p$ be
an odd prime, $j$ and $b$ positive integers such that $\left( p,j\right) =1$
and $p|b$. For an embedding of the algebraic closure of $\mathbb{Q}$, $%
\overline{\mathbb{Q}}$, into $\mathbb{C}_{p}$, we may consider the values of
a Dirichlet character $\chi $ as lying in $\mathbb{C}_{p}$. Then we define%
\begin{equation}
T_{\zeta ,\chi }^{\left( h\right) }\left( s;j,b:q\right) =\omega ^{-1}\left(
j\right) \frac{\left\langle j\right\rangle ^{s}}{b}\sum_{k=0}^{\infty }%
\binom{s}{k}\left( \frac{b}{j}\right) ^{k}B_{k,\zeta ,\chi }^{\left(
h\right) }\left( q\right) ,  \label{3,2}
\end{equation}%
\noindent for $s\in \mathbb{X}$. Since%
\begin{equation*}
\left| \binom{s}{k}\right| _{p}\leq 1\text{, }\left| \frac{b}{j}\right|
_{p}<1\text{ and }\left| B_{k,\zeta ,\chi }^{\left( h\right) }\left(
q\right) \right| _{p}\leq 1,
\end{equation*}%
\begin{equation*}
\sum_{k=0}^{\infty }\binom{s}{k}\left( \frac{b}{j}\right) ^{k}B_{k,\zeta
,\chi }^{\left( h\right) }\left( q\right)
\end{equation*}%
\noindent converges to a continuous function of $s$ in $\mathbb{X}$.

Substituting $s=m$ in (\ref{3,2}), we have%
\begin{eqnarray*}
T_{\zeta ,\chi }^{\left( h\right) }\left( m;j,b:q\right) &=&\omega
^{-1}\left( j\right) \frac{\left\langle j\right\rangle ^{m}}{b}\sum_{k=0}^{m}%
\binom{m}{k}\left( \frac{b}{j}\right) ^{k}B_{k,\zeta ,\chi }^{\left(
h\right) }\left( q\right) \\
&=&\omega ^{-m-1}\left( j\right) b^{m-1}\sum_{k=0}^{m}\binom{m}{k}\left( 
\frac{j}{b}\right) ^{m-k}B_{k,\zeta ,\chi }^{\left( h\right) }\left( q\right)
\\
&=&\omega ^{-m-1}\left( j\right) b^{m-1}B_{m,\zeta ,\chi }^{\left( h\right)
}\left( \frac{j}{b},q\right) .
\end{eqnarray*}%
\noindent If $m+1\equiv 0\left( \text{mod}\left( p-1\right) \right) $, then%
\begin{equation*}
T_{\zeta ,\chi }^{\left( h\right) }\left( m;j,b:q\right) =b^{m-1}B_{m,\zeta
,\chi }^{\left( h\right) }\left( \frac{j}{b},q\right) .
\end{equation*}%
\noindent Consequently, $T_{\zeta ,\chi }^{\left( h\right) }\left(
m,j,b:q\right) $ is continuous $p$-adic extension of $b^{m-1}B_{m,\zeta
,\chi }^{\left( h\right) }\left( \frac{j}{b},q\right) $.

Now, since%
\begin{equation*}
s_{m,\zeta }^{\left( h\right) }\left( a,b:q,\chi \right)
=\sum_{j=0}^{fb-1}\chi \left( j\right) \frac{j}{fb}B_{m,\zeta ,\chi
}^{\left( h\right) }\left( \left\{ \frac{aj}{b}\right\} ,q\right)
\end{equation*}%
\noindent and%
\begin{equation*}
T_{\zeta ,\chi }^{\left( h\right) }\left( m;j,b:q\right) =b^{m-1}B_{m,\zeta
,\chi }^{\left( h\right) }\left( \frac{j}{b},q\right) ,
\end{equation*}%
\noindent we have%
\begin{equation*}
fb^{m}s_{m,\zeta }^{\left( h\right) }\left( a,b:q,\chi \right)
=\sum_{j=0}^{fb-1}j\chi \left( j\right) T_{\zeta ,\chi }^{\left( h\right)
}\left( m;\left( aj\right) _{b},b:q\right)
\end{equation*}%
\noindent for $p|b$ and $m+1\equiv 0\left( \text{mod}\left( p-1\right)
\right) $.
\end{proof}

\section{Analogues of Hardy-Berndt Type Sums}

\setcounter{equation}{0} \setcounter{theorem}{0}

As mentioned in Section 1, the classical Dedekind sums first arose in the
transformation formula of the logarithm of the Dedekind eta function. The
logarithms of the classical theta function are studied by Berndt \cite{B1}
and Goldberg \cite{G1} derived the transformation formulas for classical
theta-functions. Arising in the transformation formulas, there are six
different arithmetic sums, which are thus similar to Dedekind sums and
called as Hardy sums or Berndt's arithmetic sums. For $h$, $k\in \mathbb{Z}$
with $k>0$, these six sums are defined as follows:%
\begin{eqnarray*}
S\left( h,k\right) &=&\sum_{j=1}^{k-1}\left( -1\right) ^{j+1+\left[ \frac{hj%
}{k}\right] _{G}},\text{ }s_{1}\left( h,k\right) =\sum_{j=1}^{k}\left(
-1\right) ^{\left[ \frac{hj}{k}\right] _{G}}\left( \left( \frac{j}{k}\right)
\right) , \\
s_{2}\left( h,k\right) &=&\sum_{j=1}^{k}\left( -1\right) ^{j}\left( \left( 
\frac{j}{k}\right) \right) \left( \left( \frac{hj}{k}\right) \right) ,\text{ 
}s_{3}\left( h,k\right) =\sum_{j=1}^{k}\left( -1\right) ^{j}\left( \left( 
\frac{hj}{k}\right) \right) , \\
s_{4}\left( h,k\right) &=&\sum_{j=1}^{k-1}\left( -1\right) ^{\left[ \frac{hj%
}{k}\right] _{G}},\text{ }s_{5}\left( h,k\right) =\sum_{j=1}^{k}\left(
-1\right) ^{j+\left[ \frac{hj}{k}\right] _{G}}\left( \left( \frac{j}{k}%
\right) \right) .
\end{eqnarray*}

\noindent The analytic and arithmetical properties of these sums were given
by Berndt \cite{B1}, Berndt and Goldberg \cite{BG1}, Can \cite{C1}, Goldberg 
\cite{G1}, Meyer \cite{Meyer}, Simsek \cite{simjnt2003dede},
Sitaramachandrarao \cite{S1}.

In this section, we show the sums, defined by Definition \ref{def10}, yield
new type sums, which we call analogues of Hardy-Berndt type sums.

By taking $m=2$, $r=1$ and $\zeta =-1$ in (\ref{1,3}), we obtain%
\begin{equation}
H_{n}\left( x,-1\right) =\frac{2^{n+1}}{n+1}\left( B_{n+1}\left( \frac{x+1}{2%
}\right) -B_{n+1}\left( \frac{x}{2}\right) \right) .  \label{M1}
\end{equation}

\noindent From Definition \ref{def7}, it is clear that $\overline{H}%
_{n}\left( x+2,-1\right) =\overline{H}_{n}\left( x,-1\right) $. Since $%
\overline{B}_{n}\left( x\right) $ is periodic for any integer, (\ref{M1})
can be written in terms of these functions as%
\begin{equation*}
\overline{H}_{n}\left( x,-1\right) =\frac{2^{n+1}}{n+1}\left( \overline{B}%
_{n+1}\left( \frac{x+1}{2}\right) -\overline{B}_{n+1}\left( \frac{x}{2}%
\right) \right) .
\end{equation*}

\noindent From (\ref{A2}), it is easy to see that (\ref{Y2*}) is also valid
for the functions $\overline{B}_{n}\left( x\right) $, that is, we have%
\begin{equation}
m^{n-1}\sum_{j=0}^{m-1}\overline{B}_{n}\left( x+\frac{j}{m}\right) =%
\overline{B}_{n}\left( mx\right) .  \label{M4}
\end{equation}

\noindent From (\ref{M4}) for $m=2$, we get%
\begin{equation*}
\overline{B}_{n}\left( \frac{x+1}{2}\right) =2^{1-n}\overline{B}_{n}\left(
x\right) -\overline{B}_{n}\left( \frac{x}{2}\right) .
\end{equation*}

\noindent We therefore have%
\begin{equation}
\overline{H}_{n}\left( x,-1\right) =\frac{2}{n+1}\overline{B}_{n+1}\left(
x\right) -\frac{2^{n+2}}{n+1}\overline{B}_{n+1}\left( \frac{x}{2}\right) .
\label{M2}
\end{equation}

Now, taking $u=-1$ in Definition \ref{def10}, we obtain%
\begin{equation}
S_{n,-1}\left( h,k\right) =\sum_{a=0}^{k-1}\left( -1\right) ^{\frac{ha}{k}}%
\frac{a}{k}\overline{H}_{n}\left( \frac{ha}{k},-1\right) .  \label{M3}
\end{equation}

\noindent Substituting (\ref{M2}) into (\ref{M3}) with $x=\frac{ha}{k}$, we
have%
\begin{equation}
S_{n,-1}\left( h,k\right) =\sum_{a=0}^{k-1}\left( -1\right) ^{\frac{ha}{k}}%
\frac{a}{k}\left( \frac{2}{n+1}\overline{B}_{n+1}\left( \frac{ha}{k}\right) -%
\frac{2^{n+2}}{n+1}\overline{B}_{n+1}\left( \frac{ha}{2k}\right) \right) .
\label{M5}
\end{equation}

\noindent By using (\ref{M5}), we define the following new sums, which we
call analogues of Hardy-Berndt type sums.

\begin{definition}
\label{def20}For $n$, $h$, $k\in \mathbb{Z}$ with $\left( h,k\right) =1$ and 
$n\geqslant 0$, we define%
\begin{eqnarray*}
HB_{n,0}\left( h,k\right) &=&\sum_{a=0}^{k-1}\left( -1\right) ^{\frac{ha}{k}}%
\frac{a}{k}\overline{B}_{n+1}\left( \frac{ha}{k}\right) , \\
HB_{n,1}\left( h,k\right) &=&\sum_{a=0}^{k-1}\left( -1\right) ^{\frac{ha}{k}}%
\frac{a}{k}\overline{B}_{n+1}\left( \frac{ha}{2k}\right) .
\end{eqnarray*}
\end{definition}

\begin{remark}
\label{re2}Let $k$ be an odd integer. Then, we have the following relations:

(\textbf{i}) If $h$ is even, then%
\begin{equation*}
HB_{n,0}\left( h,k\right) =s_{n+1}\left( h,k\right) \text{ and }%
HB_{n,1}\left( h,k\right) =s_{n+1}\left( h,2k\right) ,
\end{equation*}

\noindent where $s_{n+1}\left( h,k\right) $ is given by (\ref{A1}).

(\textbf{ii}) Let $h$ be an odd integer. Then

(\textbf{a}) If $n+1$ is even, then%
\begin{equation*}
HB_{n,1}\left( h,k\right) =2^{-1-n}s_{2,n+1}\left( h,k\right) +\frac{n+1}{%
2^{n+2}}H_{n}\left( -1\right) .
\end{equation*}

(\textbf{b}) If $n+1$ is odd, then%
\begin{equation*}
HB_{n,1}\left( h,k\right) =\frac{1}{2}\left( 1-2^{-n}\right) s_{n+1}\left(
h,k\right) -\frac{1}{4}s_{5,n+1}\left( h,k\right) -2^{-n}s_{3,n+1}\left(
h,k\right) ,
\end{equation*}

\noindent where $s_{2,n+1}\left( h,k\right) $, $s_{3,n+1}\left( h,k\right) $
and $s_{5,n+1}\left( h,k\right) $ are generalizations of Berndt's arithmetic
sums $s_{2}\left( h,k\right) $, $s_{3}\left( h,k\right) $ and $s_{5}\left(
h,k\right) $, respectively (\cite{cancenkcikurthardy}).
\end{remark}

\textbf{Conclusion: }The conclusion we can most likely draw from above is
that the sums given by Definition \ref{def10} is different from Carlitz,
Apostol, Berndt type Dedekind sums, and Definition \ref{def19} is different
from Ota type Dedekind sums. For instance, Carlitz type Dedekind sums are
defined by Frobenius-Euler numbers $H_{n-1}\left( u^{-1}\right) $ as follows 
\cite{40}:%
\begin{equation*}
S\left( h,k:n\right) =\frac{n}{h^{n}}\sum_{u}\frac{H_{n-1}\left(
u^{-1}\right) }{\left( u-1\right) \left( u^{-h}-1\right) }.
\end{equation*}%
\noindent In Definition \ref{def10} and Definition \ref{def19}, we use
Frobenius-Euler functions, which provides a different and useful approach to
the theory of Dedekind sum.

Definition \ref{def12} is different from those of Kim, Rosen and Synder and
Kudo.

\textbf{Acknowledgment: }This work was supported by Akdeniz University
Scientific Research Projects Unit.


\begin{thebibliography}{99}
\bibitem{1} T. M. Apostol, Generalized Dedekind sums and transformation
formulae of certain Lambert series, Duke Math. J. 17 (1950) 147-157.

\bibitem{22} T. M. Apostol, On the Lerch zeta function, Pacific J. Math. 1
(1951) 161-167.

\bibitem{23} T. M. Apostol, Addendum to ``On the Lerch zeta function'',
Pacific J. Math. 2 (1952) p.10.

\bibitem{5} B. C. Berndt, Character transformation formulae similiar to
those for Dedekind eta function, in: Proc. Sympos. Pure Math. 24, Amer.
Math. Soc., Providence, RI, (1973) 9-30.

\bibitem{33} B. C. Berndt, On Eisenstein series with characters and the
values of Dirichlet $L$-functions, Acta Arith. 28 (1975) 299-320.

\bibitem{34} B. C. Berndt, Reciprocity theorems for Dedekind sums and
generalizations, Adv. Math. 23 (1977) 285-316.

\bibitem{B1} B. C. Berndt, Analytic Eisenstein series, theta-functions and
series relations in the spirit of Ramanujan, J. Reine Angew. Math. 303/304
(1978) 332-365.

\bibitem{BG1} B. C. Berndt, L. A. Goldberg, Analytic properties of
arithmetic sums arising in the theory of the classical theta-functions, SIAM
J. Math. Anal. (1984) 143-150.

\bibitem{C1} M. Can, Some arithmetic on the Hardy sums $s_{2}\left(
h,k\right) $ and $s_{3}\left( h,k\right) $, Acta Math. Sin. Engl. Ser. 20
(2) 193-200 (2004) 193-200.

\bibitem{17} L. Carlitz, $q$-Bernoulli numbers and polynomials, Duke Math.
J. 15 (1948) 987-1000.

\bibitem{40} L. Carlitz, The reciprocity theorem for Dedekind sums, Pacific
J. Math. 3 (1953) 523-527.

\bibitem{20} L. Carlitz, Eulerian numbers and polynomials, Math. Mag. 32 (5)
(1959) 247-260.

\bibitem{3} L. Carlitz, Generalized Dedekind sums, Math. Zeithschr. 85
(1964) 83-90.

\bibitem{cenkcicankurt1} M. Cenkci, M. Can, V. Kurt, $p$-adic interpolation
functions and Kummer-type congruences for $q$-twisted and $q$-generalized
twisted Euler numbers, Advan. Stud. Contemp. Math. 9 (2004) 203-216.

\bibitem{35} M. Cenkci, M. Can, V. Kurt, Degenerate and character Dedekind
sums, submitted.

\bibitem{cancenkcikurthardy} M. Cenkci, M. Can, V. Kurt, Generalized Hardy
sums, submitted.

\bibitem{G1} L. A. Goldberg, Transformation of theta-functions and analogues
of Dedekind sums, Thesis, University of Illinois, Urbana, 1981.

\bibitem{2} E. Grosswald, H. Rademacher, Dedekind Sums, Carus Monograph,
no.16 Math. Assoc. Amer., Washington, D. C., 1972.

\bibitem{7} P. E. Gunnels, R. Sczech, Evaluation of Dedekind sums, Eisentein
cocycles and special values of $L$-functions, Duke Math. J. 118 (2003)
229-260.

\bibitem{13} K. Iwasawa, Lectures on $p$-adic $L$-Functions, Ann. Math.
Studies 74 Princeton University Press, Princeton, 1972.

\bibitem{29} L. C. Jang, J. H. Kim, T. Kim, D. H. Lee, D. W. Park, On Witt's
formula for the Barnes' multiple Bernoulli polynomials, Far East J. Math.
Sci 13 (3) (2004) 13-21.

\bibitem{14} T. Kim, On a $q$-analogue of the $p$-adic log gamma functions
and related integrals, J. Number Theory 76 (1999) 320-329.

\bibitem{19} T. Kim, A note on $p$-adic $q$-Dedekind sums, C. R. Acad.
Bulgare Sci. 54 (2001) 37-42.

\bibitem{25} T. Kim, $q$-Volkenborn integration, Russ. J. Math Phys. 19
(2002) 288-299.

\bibitem{24} T. Kim, On Euler-Barnes multiple zeta functions, Russ. J. Math.
Phys. 10 (2) (2003) 261-267.

\bibitem{26} T. Kim, Non-Archimedean $q$-integrals associated with multiple
Changhee $q$-Bernoulli polynomials, Russ. J. Math. Phys. 10 (2003) 91-98.

\bibitem{277} T. Kim, A new approach to $q$-zeta function, Adv. Stud.
Contep. Math. 11 (2) (2005) 157-162.

\bibitem{28} T. Kim, L. C. Jang, S.-H. Rim, H. K. Pak, On the twisted $q$%
-zeta functions and $q$-Bernoulli polynomials, Far East J. Appl. Math. 13
(1) (2003) 13-21.

\bibitem{16} T. Kim, H. S. Rim, Remark on $p$-adic $q$-Bernoulli numbers,
Advan. Stud. Contemp Math. 1 (1999) 127-136.

\bibitem{18} N. Koblitz, On Carlitz's $q$-Bernoulli numbers, J. Number
Theory 14 (1982) 332-339.

\bibitem{12} A. Kudo, On $p$-adic Dedekind sums (II), Mem. Fac. Sci. Kyushu
Univ. Ser. A 45 (2) (1991) 245-284.

\bibitem{11} A. Kudo, On $p$-adic Dedekind sums, Nagoya Math. J. 144 (1996)
155-170.

\bibitem{Meyer} J. L. Meyer, Properties of certain integer-valued analogues
of Dedekind sums, Acta Arith. LXXXII (3) (1997) 229-242.

\bibitem{9} Y. Nagasaka, K. Ota, C. Sekine, Generalizations of Dedekind sums
and their reciprocity laws, Acta Arith. 106 (4) (2003) 355-378.

\bibitem{8} K. Ota, Derivatives of Dedekind sums and their reciprocity law,
J. Number Theory 98 (2003) 280-309.

\bibitem{10} K. H. Rosen, W. M. Snyder, $p$-adic Dedekind sums, J. Reine
Angew. Math. 361 (1985) 23-26.

\bibitem{21} K. Shiratani, On Euler numbers, Mem. Fac. Sci. Kyushu Univ.
Ser. A 27 (1) (1973) 1-5.

\bibitem{simjnt2003dede} Y. Simsek, Relation between theta-function Hardy
sums Eisenstein and Lambert series in the transformation formula of $\log
\eta _{g,h}(z)$, J. Number Theory, 99 (2) (2003) 338-360.

\bibitem{simjkms2003twqBe} Y. Simsek, On $q$-analogue of the twisted $L$%
-functions and $q$-twisted Bernoulli numbers, J. Korean Math. Soc. 40 (6)
(2003) 963-975.

\bibitem{simADCMgenDedeEisn} Y. Simsek, Generalized Dedekind sums associated
with the Abel sum and the Eisenstein and Lambert series. Adv. Stud. Contemp.
Math., 9 (2) (2004) 125-137.

\bibitem{simBKMStwEulnu} Y. Simsek, On twisted generalized Euler numbers,
Bull. Korean Math. Soc., 41 (2) (2004) 299-306.

\bibitem{simjnttwEul} Y. Simsek, $q$-analogue of the twisted $l$-series and $%
q$-twisted Euler numbers, J. Number Theory, 110 (2) (2005) 267-278.

\bibitem{sim2} Y. Simsek, Theorems on twisted $L$-function and twisted
Bernoulli numbers, Advan. Stud. Contemp. Math. 11 (2) (2005) 205-218.

\bibitem{simjmaaqDed} Y. Simsek, $q$-Dedekind type sums related to $q$-zeta
function and basic $L$-series, J. Math. Anal. Appl. 318 (1) (2006) 333-351.

\bibitem{simjkms2006} Y. Simsek, $p$-adic $q$-higher-order Hardy-type sums,
J. Koren Math. Soc. 43 (1) (2006) 111-131.

\bibitem{30} Y. Simsek, Twisted $\left( h,q\right) $-Bernoulli numbers and
polynomials related to twisted $\left( h,q\right) $-zeta function and $L$%
-function, J. Math. Anal. Appl. in press.

\bibitem{simASCMsyang} Y. Simsek, S. Yang, Transformation of four
Titchmarsh-type infinite integrals and generalized Dedekind sums associated
with Lambert series. Adv. Stud. Contemp. Math., 9 (2) (2004) 195-202.

\bibitem{S1} R. Sitaramachandrarao, Dedekind and Hardy sums, Acta Arith.
XLVIII (1978) 325-340.

\bibitem{sirivastKimSimsek} H. M. Srivastava,T. Kim and Y. Simsek, $q$%
-Bernoulli numbers and polynomials associated with multiple $q$-zeta
functions and basic $L$-series, Russ. J. Math. Phys. 12 (2) (2005) 241--268.

\bibitem{32} H. Tsumura, On a $p$-adic interpolation of the generalized
Euler numbers and its applications, Tokyo J. Math. 10 (2) (1987) 281-293.

\bibitem{38} H. Xiali, W. Zhang, On the mean values of Dedekind sums with
the height of Hurwitz zeta function, J. Math. Anal. Appl. 240 (1999) 505-517.

\bibitem{37} W. Zhang, A sum analogous to the Dedekind sum and its mean
value formula, J. Number Theory 89 (2001) 1-13.
\end{thebibliography}
\end{document}